\documentclass{article}
\usepackage{graphicx} 
\usepackage{hyperref}   
\hypersetup{
	colorlinks=true,
	linkcolor=blue,
	citecolor=blue,
	urlcolor=blue,
}
\usepackage[square,numbers]{natbib}
\usepackage{xcolor}
\usepackage{amsthm}
\usepackage{algorithm}
\usepackage{algpseudocode}
\usepackage{nicematrix}
\usepackage{graphicx}
\usepackage{amsfonts}
\usepackage{amsmath,mathtools,amssymb,amsfonts}

\newcommand{\norm}[1]{\left\lVert#1\right\rVert}
\newcommand{\xddots}{%
  \raise 4pt \hbox {.}
  \mkern 6mu
  \raise 1pt \hbox {.}
  \mkern 6mu
  \raise -2pt \hbox {.}
}
\DeclareMathOperator*{\SubjectTo}{Subject\phantom{a}to:}
\DeclareMathOperator*{\Minimize}{Minimize:}

\DeclareMathOperator*{\argmin}{arg\,min}

\newtheorem{theorem}{\bf{Theorem}}

\newtheorem{proposition}{\bf{Proposition}}
\newtheorem{remark}{\bf{Remark}}

\title{Predictive Energy Management for Hybrid Powertrains}
\author{Satish Vedula, and Olugbenga Moses Anubi}

\date{}

\begin{document}

\maketitle
\begin{centering}
Department of Electrical and Computer Engineering, the Center for Advanced Power Systems, Florida State University
E-mail: \{svedula, oanubi\}@fsu.edu
\end{centering}

\textbf{COPYRIGHTED. Submitted to IFAC MECC 2026 and ASME Letters in Dynamic Systems and Control Joint Submission (Under Review)}
\section{Abstract}
Hybrid power trains (HPT) run on multiple energy sources, often involving energy storage systems/batteries (ESS). As a result, the risk of battery degradation and the reliability of energy storage elements pose a major challenge in designing an energy-efficient hybrid power train. This paper presents an energy management strategy that adaptively splits power demand between the engine and the battery pack in a hybrid power train taking into account the battery degradation. Incorporating the battery degradation model directly into the underlying optimization problem is challenging on multiple fronts: 1) Any reasonable degradation model will, due to its complexity, result in a complicated optimization problem that is impractical for real-time implementation 2) the models contain a lot of time-varying parameters that can only be determined through destructive experimental procedures. As a result, it is essential to devise heuristics that reasonably capture the degradation per usage of the batteries. One such heuristic considered in this paper is the absolute power extracted from the battery. A distributed model predictive strategy is then developed to coordinate the power split to maximize efficiency while mitigating the failure risk due to battery degradation. The designed EM strategy is demonstrated through a realistic simulation of three different hybrid power trains: hybrid road vehicles (for example: a hybrid electric vehicle (HEV)), hybrid surface vehicles (for example: dynamically positioned hybrid ships (DPS)), and hybrid aerial vehicles (for example: hybrid electric aircraft (HEA)). The results show the effectiveness of the energy management strategy in managing battery degradation.

\section{Introduction}

An HPT comprises multiple forms of energy-generating sources namely: engine and battery. Over the years hybridization of traditional power trains has broadened across all manufacturing industries for example automobile industry (hybrid electric vehicles), the naval and shipping industry (hybrid electric ships), and the aviation industry (hybrid electric aircraft). As mentioned, one such example of an HPT is a hybrid electric vehicle (HEV). Stringent regulations on environmental emissions and the impending extreme limitations on the consumption of oil and gas products are prompting the automotive industry to shift focus to more alternate-energy-dependent vehicles such as fuel cell hybrid vehicles (FHVs), battery Electric vehicles (BEVs) and hybrid electric vehicles (HEVs) \cite{CCChan}. Optimal energy management (EM) between engine and battery plays a pivotal role in enhancing the fuel efficiency of HEV, but achieving it is extremely challenging due to the changing modes of operation of HEV. Rule-based Control (RBC) is one of the HEV EM methods that gained traction during the inception of HEVs \cite{banvait}. However, it is often difficult to encapsulate and capture all the system attributes into RBC to facilitate seamless operation. As a result, enhancements in computational devices have led to a substantial increase in the use of optimization-based methods such as model predictive control (MPC) \cite{Borhan}\cite{east}, reinforcement learning (RL) \cite{HuX}\cite{LinX}. 

Improvement of fuel economy and range extension are two of the major key performance indicators in the EM of HEVs, with cost functions defined in the objective of MPC problem to consider fuel injection and efficient engine operation. Numerous investigators have proposed algorithms that made fuel minimization a key part of the optimization problem \cite{Borhan,east,vaya,Moura,Golebiewski,east3,shen}. However, it is a well-known fact that the batteries degrade faster than the engines. Thus, it is important to prescribe optimization problems that consider fuel consumption while simultaneously managing battery degradation. 

\begin{figure}[t!]
\centerline{\includegraphics[width=0.95\columnwidth]{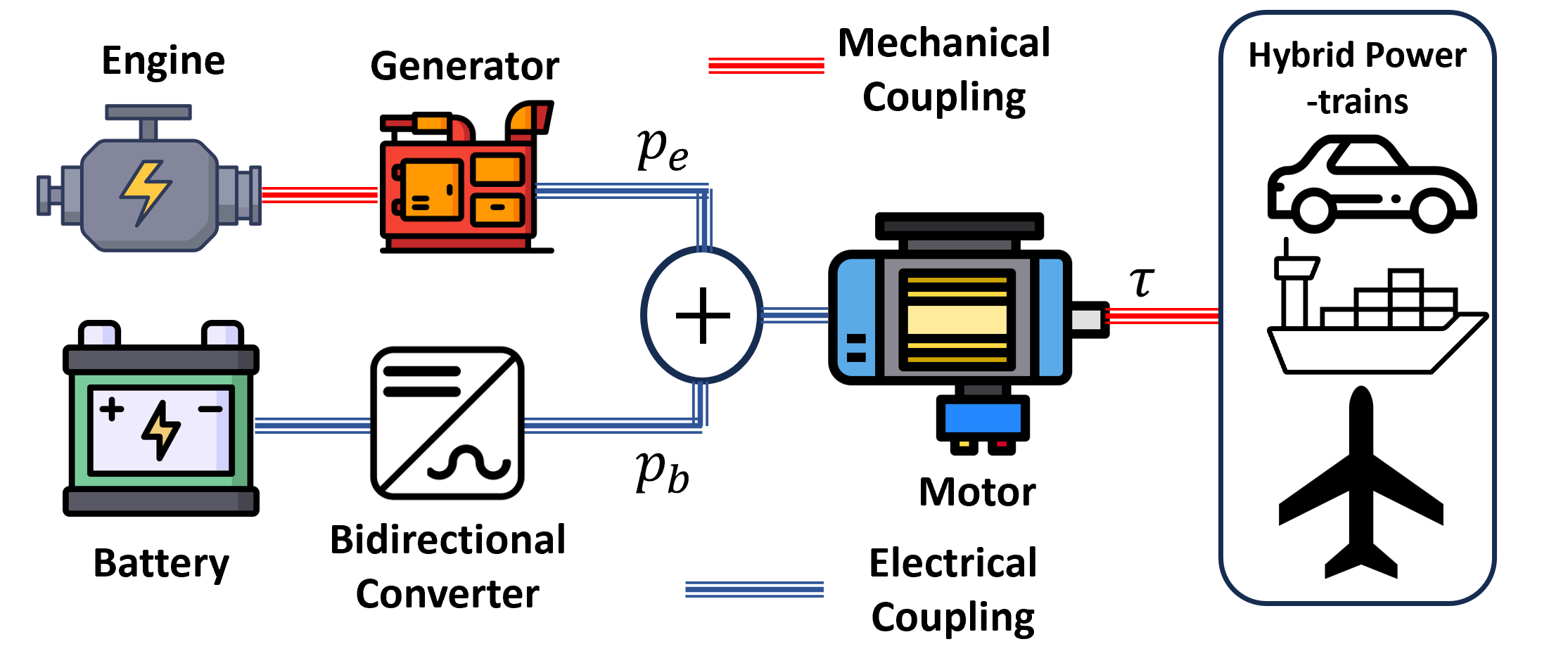}}
\caption{Hybrid power-train schematic with different applications to road vehicles (hybrid electric cars), surface vehicles (dynamically positioned ships), and aerial vehicles (hybrid electric aircraft).}
\label{HEV_Scheme}
\end{figure}

Related existing approaches have considered both fuel minimization and battery management in designing an EM for HEV \cite{Jia,Kim,Khalatbarisoltani,Hu,Mehraban,Li,Ju}. In \cite{Jia}, the authors propose an adaptive MPC for EM of HEV and validate the control through a real-time hardware in loop (HIL) test. The state of charge (SoC) of the battery was regulated around a set-point value, but the impact of this EM design on the battery's health was not considered. In \cite{Kim} the authors incorporated both the battery thermal and degradation models via the so-called \emph{coulomb counting} in their proposed EM strategy. In their work, the impact of the designed EM on battery health was studied. However, the driving duration considered was around 2 hours, which is insufficient to understand the effect of the battery degradation completely. In \cite{Khalatbarisoltani}\cite{Ju} the battery state of health (SoH) model is used directly in the optimization to minimize the battery degradation. However, due to the complex dynamics of the battery capacity function, it is difficult to estimate the battery capacity in order to minimize the degradation. In \cite{Mehraban}, the authors chose to minimize battery power to slow down the battery degradation. However, the approach taken is based on Pontryagin's minimum principle in which not all the system constraints were able to be accounted for. Also, the simulation duration of 200 $\textsf{seconds}$ is insufficient to capture the complex degradation mechanism.

Dynamically positioned hybrid ships as a case study of a hybrid power train, and studying the load split is carried out by numerous researchers in the existing literature \cite{doi:10.1080/20464177.2018.1505584,10.1115/1.4065483,10.1115/1.4048588,DINH201898,BORDIN2019425}. Similarly, hybrid electric aircraft as an example of the hybrid power train is also carried out by few researchers in the existing literature \cite{10.1115/1.4044956,10.1115/GT2023-103131,10.1115/1.4050870}. However, the impact of battery degradation was not explicitly considered in the existing approaches for both dynamically positioned hybrid ships and hybrid electric aircraft. The key contributions in this work are:
\begin{enumerate}
    \item A generalized Euler-Lagrange-based model is presented for HPTs with an example of Road vehicles, Surface vehicles, and Ariel vehicles.
    \item A model predictive energy management strategy considering a heuristic to minimize the battery power, thus minimizing the battery degradation and the efficient operation of the engine around an operating point. Fig.~\ref{Hierarchical} shows the overall developed framework.
    \item A scalable plug-and-play distributed framework is developed, which can be applied to hybrid power trains consisting of $n$ number of power sources.
    \item Realistic simulation experiments are conducted using real drive-cycle data from the U.S. Environment and Protection Agency (EPA) to mimic long-range driving (360 hours) for hybrid electric vehicles and capture evolutionary trends for battery degradation. This demonstrates the effect of the designed model predictive energy management strategy in achieving stated objectives. Similar simulations are performed for dynamically positioned hybrid ships and hybrid electric aircraft.
    \item A \textit{real-time} target simulation of the proposed algorithm in testing the algorithm's real-time capabilities via three test cases of a hybrid electric vehicle, dynamically positioned hybrid ship, and hybrid electric aircraft.
\end{enumerate}

\begin{figure*}[h!]
      \centering
      \includegraphics[width=0.99\columnwidth]{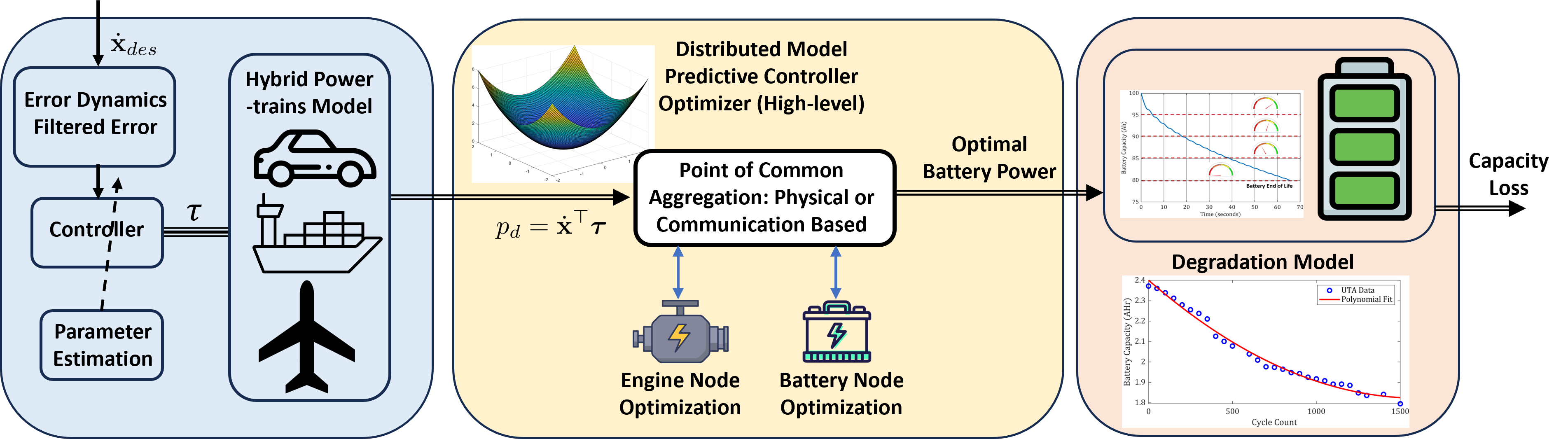}
	 \caption{Distributed optimization implementation scheme}
     \label{Hierarchical} 
\end{figure*}

The rest of the paper is organized as follows: In Section-\ref{sec:notations}, the mathematical notations used throughout the paper are presented. Section-\ref{Sec: Model} contains the model development of the battery and the drive-line dynamics. In Section-\ref{Sec: Model}, the proposed distributed battery health aware model predictive energy management (MPEM) is developed. In Section-\ref{Sec: Simulation}, the developed EM controller is tested via a realistic simulation of the developed model, and the results are presented and discussed. Finally, the conclusions are presented in Section-\ref{Sec: Conclusion}.

\section{Notations} \label{sec:notations}
The set of natural numbers is represented by $\mathbb{N}$ and the set of real numbers denoted by $\mathbb{R}$. A matrix with $n$ rows and $m$ columns is denoted as $\mathbb{R}^{n \times m}$. $\mathbb{R}_+$ denotes the set of positive real numbers. $\mathbb{L}_2$ and $\mathbb{L}_{\infty}$ denote the square-integrable (measurable) and bounded signal spaces. Scalars are denoted by lowercase alphabets (for example $x \in \mathbb{R}$ and $y \in \mathbb{N}$). The notation $\mathcal{P} \subset \mathbb{R}^n$ represents polytopes of the form $\{\mathbf{x} \in \mathbb{R}^n|A_i\mathbf{x}_i \preceq \mathbf{b}_i,A_e\mathbf{x}=\mathbf{b}_e\}$, where the parameters $A_e,\mathbf{b}_e,A_i,\mathbf{b}_i$ can be inferred from problem specific context. Real vectors are represented by the lowercase bold alphabets (i.e. $\textbf{x} \in \mathbb{R}^{n}$). The vector of ones is denoted as $\mathbf{1}$. The vector of zeros is denoted as $\underline{\mathbf{0}}$. $I_n \in \mathbb{R}^{n \times n}$ denotes the identity matrix of dimension $n$. For any vector $\mathbf{x} \in \mathbb{R}^n$, $\|\mathbf{x}\|_2 \triangleq \sqrt{\mathbf{x}^\top\mathbf{x}}$, represents the 2-norm. The symbol $\preceq$ denotes the component-wise inequality i.e. $\mathbf{x} \preceq \mathbf{y}$ is equivalent to $\mathbf{x}_i \leq \mathbf{y}_i$ for $i=1,2,\hdots,n$.

\section{Model Development} \label{Sec: Model}

The hybrid power-train considered in this paper, shown in Fig.~\ref{HEV_Scheme}, is a parallel interconnection of two energy pathways -- one consisting of a unidirectional engine/generator source, while the other is a bidirectional storage/converter resource. This interconnection supplies an electric motor which is a prime mover for the vehicle dynamics. As shown in the figure, this architecture is applicable to a wide variety of modalities including road vehicles, surface vehicles, and aerial vehicles. To this effect, the generalized vehicle dynamics is given by the Euler-Lagrange dynamics \cite{Dixon}
\begin{equation}\label{genaralized_dynamics}
    M(\mathbf{x})\ddot{\mathbf{x}} + V_m(\mathbf{x},\dot{\mathbf{x}})\dot{\mathbf{x}}+G(\mathbf{x})+F(\dot{\mathbf{x}}) = \boldsymbol{\tau},
\end{equation}
where $\mathbf{x}(t), \dot{\mathbf{x}}(t) \in \mathbb{R}^n$ are the generalized position and velocity respectively, $M(\mathbf{x}) \in \mathbb{R}^{n \times n}$ denotes the inertia matrix, $V_m(\mathbf{x},\dot{\mathbf{x}}) \in \mathbb{R}^{n \times n}$ the centripetal-Coriolis matrix, $G(\mathbf{x}) \in \mathbb{R}^{n \times n}$ the gravity vector, $F(\dot{\mathbf{x}}) \in \mathbb{R}^{n \times n}$ the generalized damping torque, and $\boldsymbol{\tau} \in \mathbb{R}^n$ generalized torque input. The dynamics in \eqref{genaralized_dynamics} is well known \cite{spong2020robot} to satisfy the following  properties which are used for subsequent control development:
\begin{enumerate}
    \item{There exists positive real numbers $m_2>m_1>0$ such that $m_1\left\|\mathbf{z}\right\|^2\le \mathbf{z}^\top M(\mathbf{x})\mathbf{z}  \le m_2\left\|\mathbf{z}\right\|^2 $ for all $\mathbf{x},\hspace{2mm}\mathbf{z}\in\mathbb{R}^n$}
    \item {The skew symmetric property $\mathbf{z}^\top\left(\dot{M}(\mathbf{x})-\frac{1}{2}V_m(\mathbf{x},\dot{\mathbf{x}})\right)\mathbf{z} = 0$ holds for all $\mathbf{x},\hspace{2mm}\mathbf{z}\in\mathbb{R}^n$,}
    \item \textit{Linear Parametrization:} For a continuously differentiable trajectory given by the stable dynamics $\dot{\mathbf{x}}_d = f(\mathbf{x}_d)$, where $f:\mathbb{R}^n\rightarrow \mathbb{R}^n$ is a locally lipschitz vector field, the dynamics in \eqref{genaralized_dynamics} can be linearly parametrized as follows
    \begin{align}\label{parameterization}
        V_m(\mathbf{x},\dot{\mathbf{x}}) f(\mathbf{x}_d) &+ G(\mathbf{x}) + F(\dot{\mathbf{x}})+ M(\mathbf{x})\nabla f(\mathbf{x}_d)f(\mathbf{x}_d) =  Y(\mathbf{x},\dot{\mathbf{x}},\mathbf{x}_d)\boldsymbol{\theta},
    \end{align}
    where $Y(\mathbf{x},\dot{\mathbf{x}},\mathbf{x}_d) \in \mathbb{R}^{n \times n_\theta}$ is a \textit{regressor} matrix containing known and measured quantities, and $\boldsymbol{\theta} \in \mathbb{R}^{n_\theta}$ is the corresponding vector of the constant unknown parameters.
\end{enumerate}

Next, we present the design of a generalized control $\boldsymbol{\tau}$ that stabilizes the dynamics in \eqref{genaralized_dynamics} and is subsequently used in determining the power demand. 

\begin{theorem}
    Consider the generalized vehicle dynamics in \eqref{genaralized_dynamics}, together with a differentiable reference trajectory generated by the stable dynamics $\dot{\mathbf{x}}_{d} = f(\mathbf{x}_d)$. Let $$\boldsymbol{\eta}(t) \triangleq \dot{\mathbf{x}}(t) - \dot{\mathbf{x}}_{d}(t) $$ be the associated speed tracking error. Given positive real numbers $k_1, \gamma_1$, the closed loop speed tracking error dynamics is globally asymptotically stable (GAS) under the control law
    \begin{equation}\label{control_law}
        \boldsymbol{\tau}(t) = -k_1 \boldsymbol{\eta}(t) - \gamma_1 Y(t)  \int_{0}^{t} Y(\nu)^\top \boldsymbol{\eta}(\nu) d\nu,
    \end{equation}
    where $Y(t)$ is given in \eqref{parameterization}.
\end{theorem}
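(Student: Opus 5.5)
We will use a Lyapunov argument in the style of classical adaptive control for Euler--Lagrange systems. The integral term in \eqref{control_law} plays the role of a parameter estimate. Define
\begin{equation*}
\hat{\boldsymbol{\theta}}(t) \triangleq -\gamma_1 \int_0^t Y(\nu)^\top \boldsymbol{\eta}(\nu)\, d\nu, \qquad \tilde{\boldsymbol{\theta}} \triangleq \hat{\boldsymbol{\theta}} + \boldsymbol{\theta},
\end{equation*}
so that $\dot{\tilde{\boldsymbol{\theta}}} = -\gamma_1 Y^\top \boldsymbol{\eta}$, since $\boldsymbol{\theta}$ is constant. The control law then reads $\boldsymbol{\tau} = -k_1\boldsymbol{\eta} + Y\hat{\boldsymbol{\theta}}$. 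We will interpret the sign convention so that $Y\hat{\boldsymbol{\theta}}$ cancels the regressor term; if necessary, we redefine $\boldsymbol{\theta}$ to absorb the sign.

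First we derive the error dynamics. Using $\ddot{\mathbf{x}}_d = \nabla f(\mathbf{x}_d) f(\mathbf{x}_d)$ and $\dot{\mathbf{x}} = \boldsymbol{\eta} + f(\mathbf{x}_d)$, substitution into \eqref{genaralized_dynamics} gives
\begin{equation*}
M\dot{\boldsymbol{\eta}} + V_m\boldsymbol{\eta} + \big[V_m f(\mathbf{x}_d) + G + F + M\nabla f(\mathbf{x}_d) f(\mathbf{x}_d)\big] = \boldsymbol{\tau}.
\end{equation*}
By the linear parametrization property \eqref{parameterization}, the bracketed term equals $Y\boldsymbol{\theta}$. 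Hence
\begin{equation*}
M\dot{\boldsymbol{\eta}} = -V_m\boldsymbol{\eta} - k_1\boldsymbol{\eta} + Y\tilde{\boldsymbol{\theta}},
\end{equation*}
with the estimate sign chosen consistently.

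Next we take the candidate
\begin{equation*}
V = \tfrac{1}{2}\boldsymbol{\eta}^\top M(\mathbf{x})\boldsymbol{\eta} + \tfrac{1}{2\gamma_1}\tilde{\boldsymbol{\theta}}^\top\tilde{\boldsymbol{\theta}},
\end{equation*}
which is positive definite and radially unbounded in $(\boldsymbol{\eta},\tilde{\boldsymbol{\theta}})$ by property 1 of the dynamics. Differentiating gives
\begin{equation*}
\dot V = \boldsymbol{\eta}^\top\big(\tfrac12\dot M - V_m\big)\boldsymbol{\eta} - k_1\|\boldsymbol{\eta}\|^2 + \boldsymbol{\eta}^\top Y\tilde{\boldsymbol{\theta}} + \tfrac{1}{\gamma_1}\tilde{\boldsymbol{\theta}}^\top\dot{\tilde{\boldsymbol{\theta}}}.
\end{equation*}
The skew-symmetry property removes the first term, and the update law cancels the cross term, leaving $\dot V = -k_1\|\boldsymbol{\eta}\|^2 \le 0$.

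It follows that $\boldsymbol{\eta}, \tilde{\boldsymbol{\theta}} \in \mathbb{L}_\infty$ and, integrating $\dot V$, that $\boldsymbol{\eta}\in\mathbb{L}_2$.

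The main obstacle is upgrading this to $\boldsymbol{\eta}\to 0$, because $\dot V$ is only negative semidefinite and the closed loop is nonautonomous through $\mathbf{x}_d(t)$. We would first try Barbalat's lemma. Boundedness of the stable reference $\mathbf{x}_d$ and $f(\mathbf{x}_d)$ gives bounded $\dot{\mathbf{x}}$. Local Lipschitzness then makes $Y$, $V_m$ and $\nabla f$ bounded on the relevant compact sets, provided $\mathbf{x}$ stays in a region where these are bounded; we would assume $Y$ depends on $\mathbf{x}$ only through bounded quantities, or argue via the structure of the vehicle models. With that, $M^{-1}$ bounded by property 1 gives $\dot{\boldsymbol{\eta}}\in\mathbb{L}_\infty$, so $\boldsymbol{\eta}$ is uniformly continuous. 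Barbalat then yields $\boldsymbol{\eta}(t)\to 0$ for all initial conditions, which establishes global convergence.

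Lyapunov stability of the origin of $\boldsymbol{\eta}$ follows from $V$. We would state GAS in the sense of a stable, globally attractive error $\boldsymbol{\eta}$, noting that $\tilde{\boldsymbol{\theta}}$ is only guaranteed bounded unless persistence of excitation holds. An alternative route is LaSalle--Yoshizawa.
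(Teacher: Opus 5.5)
Your proposal follows the paper's proof essentially step for step: the same Lyapunov function $\tfrac12\boldsymbol{\eta}^\top M\boldsymbol{\eta}+\tfrac{1}{2\gamma_1}\tilde{\boldsymbol{\theta}}^\top\tilde{\boldsymbol{\theta}}$, skew-symmetry to reach $\dot V=-k_1\|\boldsymbol{\eta}\|^2$, then $\boldsymbol{\eta}\in\mathbb{L}_2\cap\mathbb{L}_\infty$ and Barbalat's lemma; your justification of uniform continuity through boundedness of $\dot{\boldsymbol{\eta}}$, which needs $Y$ and $V_m$ bounded along trajectories, makes explicit an assumption the paper simply asserts. The only slip is the sign of $\tilde{\boldsymbol{\theta}}$: with $\hat{\boldsymbol{\theta}}=-\gamma_1\int_0^t Y^\top\boldsymbol{\eta}\,d\nu$ you should take $\tilde{\boldsymbol{\theta}}=\hat{\boldsymbol{\theta}}-\boldsymbol{\theta}$, which gives $M\dot{\boldsymbol{\eta}}=-V_m\boldsymbol{\eta}-k_1\boldsymbol{\eta}+Y\tilde{\boldsymbol{\theta}}$ and $\dot{\tilde{\boldsymbol{\theta}}}=-\gamma_1 Y^\top\boldsymbol{\eta}$ directly for the stated control law, so no redefinition of $\boldsymbol{\theta}$ is needed.
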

\begin{proof}
    Consider the open-loop error dynamics
    \begin{equation}
      M(\mathbf{x}) \boldsymbol{\dot{\eta}} = \boldsymbol{\tau} - V_m(\mathbf{x},\dot{\mathbf{x}}) \boldsymbol{\eta} - Y\boldsymbol{\theta}  
    \end{equation}
   substituting the designed control law in \eqref{control_law}, the following closed-loop dynamics is obtained
    $$M(\mathbf{x})\dot{\boldsymbol{\eta}} = -k_1 \boldsymbol{\eta} - V_m(\mathbf{x},\dot{\mathbf{x}}) \boldsymbol{\eta} - Y\boldsymbol{\tilde{\theta}},$$
    where 
    \begin{align*}
    \boldsymbol{\tilde{\theta}} &= \boldsymbol{\theta} - \boldsymbol{\hat{\theta}}(t),\\
    &= \boldsymbol{\theta} - \gamma_1 Y(t)  \int_{0}^{t} Y(\nu)^\top \boldsymbol{\eta}(\nu) d\nu,
    \end{align*}
    is the parameter estimation error.

    Consider the Lyapunov candidate function
    $$V = \frac{1}{2}\boldsymbol{\eta}^\top M \boldsymbol{\eta} + \frac{1}{2 \gamma_1}\boldsymbol{\tilde{\theta}}^\top \boldsymbol{\tilde{\theta}},$$
    taking the first time derivative along the variables and making use of the skew-symmetric property and substituting the control law in \eqref{control_law} yields,
    $$\dot{V} = -k_1\norm{\boldsymbol{\eta}}^2,$$
    since $\dot{V}$ is negative semi-definite and $V > 0$. Thus, $V \in \mathbb{L}_{\infty}$, which implies $\boldsymbol{\eta} \in \mathbb{L}_{\infty}$, since it is assumed that the reference speed $\dot{\mathbf{x}}_{d} \in \mathbb{L}_{\infty}$, it implies that $\dot{\mathbf{x}} \in \mathbb{L}_{\infty}$. Integrating $\dot{V}$ yields,
    $$V(\infty) - V(0) = -k_1 \int_{0}^{\infty} \norm{\boldsymbol{\eta}(t)}^2 dt,$$
    It follows that $\boldsymbol{\eta} \in \mathbb{L}_2$. Also $\boldsymbol{\eta}$ is uniformly continuous. Thus, invoking Barbalat's lemma \cite{Khalil} it follows that $\lim\limits_{t \rightarrow \infty} \boldsymbol{\eta}(t) = \underline{\mathbf{0}}.$ 
\end{proof}
\begin{remark}
    The resulting generalized power demand that needs to be tracked by the engine ($p_e \in \mathbb{R}$) and the battery ($p_b \in \mathbb{R}$) is given as
    $p_e + p_b = p_d,$ where
    \begin{equation}
        p_d = -\dot{\mathbf{x}}^\top \bigg(k_1 \boldsymbol{\eta}(t) + \gamma_1 Y(t)  \int_{0}^{t} Y(\nu)^\top \boldsymbol{\eta}(\nu) d\nu \bigg).
    \end{equation}
\end{remark}
Fig.~\ref{Speed_Track} shows the derived power demand given the speed profile for road vehicles (hybrid electric cars), surface vehicles (dynamically positioned ships), and aerial vehicles (hybrid electric aircraft).

We make the following remarks concerning special cases of the generalized vehicle dynamics in \eqref{genaralized_dynamics}.
\begin{remark}[Hybrid road vehicles]
    The longitudinal vehicle dynamics \cite{Rajamani} of hybrid road vehicles is of the form in \eqref{genaralized_dynamics} with 
\begin{align*}
    & M(x) = m, \quad & V_m(x,\dot{x}) = 0, \\
    & G(x) = \mu_r mg cos(\phi), \quad & F(\dot{x}) = 0.5 \rho A C_d \dot{x}^2,
\end{align*}
    where $\dot{x}(t) \in \mathbb{R}$ is the vehicle speed [in \textsf{m/s}], $\tau(t) \in \mathbb{R}$ is the effective wheel torque [in \textsf{N-m}], $m \in \mathbb{R}_+$ is the mass of the vehicle [in \textsf{kg}], $\rho_d \in \mathbb{R}_+$ is density of air [in $\textsf{Kg-m}^{-2}$], $C_d \in \mathbb{R}_+$ is the aerodynamic drag coefficient [no units], $A \in \mathbb{R}_+$ is the frontal area of the car [ in $\textsf{m}^2$ ], $\mu_r \in \mathbb{R}_+$ is the coefficient of rolling resistance [no units], $g=9.87m/s^2$ is the acceleration due to gravity, $\phi \in \mathbb{R}_+$ is the road gradient (assumed to be constant), and $R$ is the effective wheel radius in [ \textsf{m} ]. The road vehicle dynamics can be linearly parameterized as 
    \begin{align*}
      &  Y = \begin{bmatrix}
            \dot{x}^2 \\
            \ddot{x}_{d}^\top  \\
            1
        \end{bmatrix}^\top,
      &  \boldsymbol{\theta} = \begin{bmatrix}
          0.5 \rho A C_d \\
          m \\
          \mu_r mg cos(\phi) 
      \end{bmatrix} \in \mathbb{R}^{3}. 
    \end{align*}
\end{remark}
 \begin{remark}[Hybrid surface vehicles]
     For hybrid ships and other thruster-driven hybrid surface vehicles \cite{Dixon}, \cite{Loria}, the dynamics satisfy \eqref{genaralized_dynamics} with
     $ M(\mathbf{x}) = R\overline{M}R^\top$, $V_m(\mathbf{x},\dot{\mathbf{x}}) = R\overline{M}\dot{R}^\top$, $F(\dot{\mathbf{x}}) = RDR^\top\dot{\mathbf{x}}$ and $G(\mathbf{x}) = RK\mathbf{x}$, where
     \begin{align*}
        & R = \begin{bmatrix}
    cos(\psi) & -sin(\psi) & 0 \\ sin(\psi) & cos(\psi) & 0 \\ 0 & 0 & 1
\end{bmatrix}, \\
& \overline{M} = \begin{bmatrix}
    m_{11} & 0 & 0 \\ 0 & m_{22} & m_{23} \\ 0 & m_{23} & m_{33}
\end{bmatrix}, D = \begin{bmatrix}
    d_{11} & 0 & 0 \\ 0 & d_{22} & d_{23} \\ 0 & d_{23} & d_{33}
\end{bmatrix},
     \end{align*}
and $K\in\mathbb{R}^{3\times 3}$ is a diagonal matrix accounting for the mooring forces. $R$ represents the rotation between the Earth and body-fixed coordinate frames, $M \in \mathbb {R}^{3 \times 3}$ denotes the positive-definite and symmetric constant mass-inertia matrix, $D \in \mathbb{R}^{3 \times 3}$ denotes the constant damping matrix. $\mathbf{x} \triangleq [x,y,\psi]^\top$ is the state vector consisting of the translational positions and the yaw angle. For the surface vehicles control input $\boldsymbol{\tau} \in \mathbb{R}^3$ denotes the control force/ control torque required to drive the propeller. The dynamics can be linearly parameterized as \cite{Dixon}
\begin{align*}
    & Y(\mathbf{x},\dot{\mathbf{x}},\ddot{\mathbf{x}}) \in \mathbb{R}^{3 \times 9}, \\
    & \boldsymbol{\theta} = [m_{11};m_{22};m_{23};m_{33};d_{11};d_{22};d_{23};d_{32};d_{3}] \in \mathbb{R}^9
\end{align*}
 \end{remark}
 \begin{remark}[Hybrid aerial Vehicles]
     For aerial vehicles such as hybrid electric aircraft, the longitudinal dynamical model is of the form in \eqref{genaralized_dynamics} with \cite{DOFFSOTTA20206043}
     \begin{align*}
       & M(x) = m_a, \quad &   V_m(x,\dot{x}) = 0, \\
       & G(x) = m_a g sin(\phi_a), \quad & F(\dot{x}) = 0.5 \rho_a S C_d \dot{x}^2,
     \end{align*}
    where $\dot{x}(t) \in \mathbb{R}$ is the aircraft longitudinal velocity [in \textsf{m/s}], $\tau(t) \in \mathbb{R}$ is the effective thrust [in \textsf{N-m}], $m_a \in \mathbb{R}_+$ is the mass of the aircraft [in \textsf{kg}], $\rho_a \in \mathbb{R}_+$ is density of air [in $\textsf{Kg-m}^{-2}$], $C_d \in \mathbb{R}_+$ is the aerodynamic drag coefficient [no units], $S \in \mathbb{R}_+$ is the wing area of the aircraft [ in $\textsf{m}^2$ ], $g=9.87m/s^2$ is the acceleration due to gravity, $\phi_a \in \mathbb{R}_+$ is the aircraft path angle [in \textsf{degree}]. The road vehicle dynamics can be linearly parameterized as
    \begin{align*}
      & Y = \begin{bmatrix}
           \dot{x}^2 \\
           \ddot{x}_{d} \\
           1
      & \end{bmatrix}^\top ,
      \boldsymbol{\theta} = \begin{bmatrix}
          0.5 \rho_a C_d S \\
          m_a \\
          m_a g sin(\phi_a)
        \end{bmatrix} \in \mathbb{R}^3
       \end{align*}
 \end{remark}

 Next, we present the modeling of the energy storage system.

\begin{figure}[t!]
\centering
\centerline{\includegraphics[width=0.95\columnwidth]{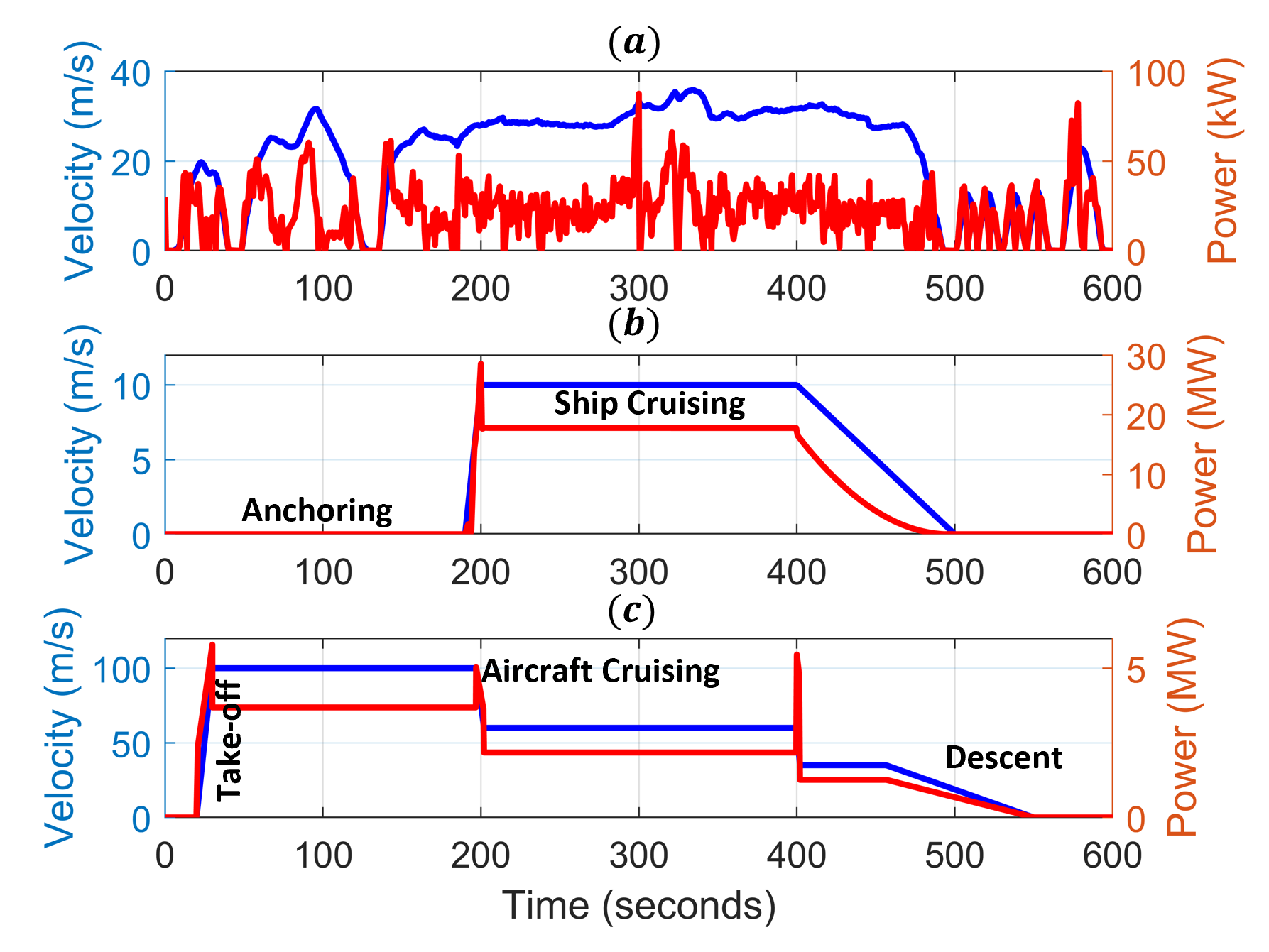}}
\caption{(a) Velocity and power required to track the velocity performance for the hybrid road vehicles, (b) Velocity and power required to track the velocity performance for the hybrid surface vehicles, (c) Velocity and power required to track the velocity performance for the hybrid aerial vehicles.}
\label{Speed_Track}
\end{figure}

\section{Energy Storage System Modeling}
The Energy Storage System (ESS) constitutes a single or multiple hybrid energy storage elements. This might include a collection of flywheels, or super-capacitors. The dynamics of the ESS used in this work and the battery current calculations are based on \cite{10.1115/1.4069921,2021_Mehrzad}. The relationship between the ESS power injected ${p}_{b}$ and the SoC is given as follows:
\begin{equation}\label{SoC_Power}
    q(t) = q_0-\frac{1}{{Q}_{T}v_{b}} \int_{0}^{t} p_{b}(\nu) {d}\nu,
\end{equation}
where ${Q}_{T} \in \mathbb{R}_+$ is the total capacity of the battery in \textsf{AHr}, $v_b \in \mathbb{R}_+$ denotes the battery voltage, which is algebraically expressed as \begin{equation} 
v_b = v_{oc}(q)-r_b i_b, 
\end{equation} 
where $v_{oc}(q)$ is the open-circuit voltage which is a function of the state of charge. The relation between $v_{oc}$ and SoC can be approximated using various functions ranging from lower to higher orders with the linear approximation being the fundamental one given as $v_{oc}=c_1q+c_2$, where $c_1,c_2$ are constants \cite{Weng}. $r_b$ is the internal resistance of the battery and $i_b \in \mathbb{R}$ is the battery current, $q_0 \in [0,1]$ is the initial SoC of the battery. Although the SoC dynamics above does not explicitly capture the physical restriction $q(t)\in\left[0,\hspace{1mm}1\right]\text{ for all }t\in\mathbb{R}_+$, we impose this explicit constraint in the underlying optimization problem of the proposed energy management strategy. As a result, it is ignored in the dynamics without loosing fidelity. The SoC dynamics in (\ref{SoC_Power}) is discretized, with a sampling time ${T_s} \in \mathbb{R}_+$, to obtain the following discrete dynamics:

\begin{equation}\label{SoC_Power_Disctrete}
    \sum_{{k}=0}^{N} p_{{b}_{k}} = \frac{{Q}_{T}v_{b}}{T_s}(q_\text{0}-q_N),
\end{equation}
where $q_{N}$ is the SoC at the time instant $N{T_s} \in \mathbb{R_+}$, and $N$ is the number of discrete time samples in the prediction horizon.

The battery degradation model is based on the Arrhenius equation and uses the $Ah-$throughput $\displaystyle \int_{0}^{t}\left|i_b(\nu)\right|d\nu$ as a metric to evaluate the battery state of health (SoH), where $i_b(t) \in \mathbb{R}$ is the current drawn from the battery (positive in the discharge direction and vice-versa). The \emph{capacity loss} is given as \cite{SONG2018433}:
\begin{equation}
    Q_L(t) =  e^{\frac{-\zeta_2+TC_{r}}{RT}}\int_0^{t}|i_b(\nu)|^{\frac{1}{2}}d\nu,
\end{equation}
where $T \in \mathbb{R}_+$ is the baseline temperature (in \textsf{Kelvin}) of the battery where it is most efficient and $C_r \in \mathbb{R}_+$ is the C-rate of the battery. The capacity loss (\%) is given as follows: $$\Delta Q \% = \frac{Q_T-Q_{L}(t)}{Q_T}\times 100,$$
where $Q_{L}$ is the \emph{capacity loss} of the ESS in \textsf{ampere-hour}. $\Delta Q$ only captures the \textit{capacity loss} during the ESS operation.

We have presented the model development required and next we present the proposed energy management development.

\section{Proposed Energy Management Method}\label{Sec: Control_Development}
The MPEM scheme is given in the form of the optimization problem:
\begin{equation}\label{MPC_No_Equality}
    \begin{aligned}
    \Minimize_{p_{e_k},p_{b_k},q_k} \quad & \sum_{k=1}^{h} \bigg(\frac{\alpha}{2}({p_e}_k+{p_b}_k-\hat{{p}}_{d_k})^2 +C_e({p_e}_k)+C_b({p_b}_k)\bigg) \\
\SubjectTo  \hspace{2mm} &  \underline{p}_e \preceq p_{e_k} \preceq \overline{p}_e, \hspace{1mm} \forall k=1,2,\hdots,h, \\ 
     & \underline{p}_b \preceq p_{b_k} \preceq \overline{p}_b, \hspace{1mm} \forall k=1,2,\hdots,h, \\
   & \underline{q} \preceq q_{k} \preceq \overline{q}, \hspace{1mm} \forall k=1,2,\hdots,h, \\
   & |p_{e_k} - p_{e_{k-1}}| \preceq r_e, \hspace{1mm} \forall k=1,2,\hdots,h, \\
   & |p_{b_k} - p_{b_{k-1}}| \preceq r_e, \hspace{1mm} \forall k=1,2,\hdots,h, \\
   & \sum_{{k}=1}^{h} p_{{b}_{k}} = \frac{{Q}_{T}v_{b}}{T_s}(q_\text{0}-q_h),
    \end{aligned}
\end{equation}
where $\left[\begin{array}{cccc} {p_{e_1},p_{e_2},\hdots,p_{e_h}} \end{array}\right]^\top \in \mathbb{R}^h$ is the engine power profile over a prediction horizon of length $h$, $\left[\begin{array}{cccc} {p_{b_1},p_{b_2},\hdots,p_{b_h}} \end{array}\right]^\top \in \mathbb{R}^h$ is the battery power profile over a prediction horizon of length $h$, $\left[\begin{array}{cccc} {\hat{p}_{d_1},\hat{p}_{d_2},\hdots,\hat{p}_{d_h}} \end{array}\right]^\top \in \mathbb{R}^h$ is the power forecast over the horizon of the length $h$ assumed to be held constant for the length of the horizon, $q_h$ is the SoC at the end of each horizon, $\alpha \in \mathbb{R}_+$ is the weighting associated with the power tracking objective. $C_e:\mathbb{R}^h \to \mathbb{R}_+$ is the cost function associated with the engine efficiency. $C_b:\mathbb{R}^h \to \mathbb{R}_+$ is the cost function associated with the battery degradation and health. The constraint set is a \textit{polytope} comprised of an affine equality constraints and an affine inequality constraints. For the ease of exposition, we group the respective engine, and battery constraints as $\mathcal{P}_e, \mathcal{P}_b \subset \mathbb{R}^h$ representing individual constraint sets for each of the engine and the battery capturing their ramp-rate limitations and box constraints and relevant system dynamic constraints. The goal here is to split the optimization problem in (\ref{MPC_No_Equality}) into two local optimization problems, one each for the engine and the battery. First, we write (\ref{MPC_No_Equality}) as:
\begin{equation}\label{MPC_HEV_ADMM_REWRITTEN}
\begin{aligned}
\Minimize_{y_{e_k},y_{b_k},p_{e_k},p_{b_k}} \quad & \sum_{k=1}^{h} \bigg(\frac{\alpha}{2}({y_e}_k+{y_b}_k-\hat{p}_{d_k})^2+ C_e({p_e}_k)+C_b({p_b}_k)\bigg)   \\
\SubjectTo \quad & p_{e_k}  = y_{e_k}, \hspace{2mm} \forall k=1,2,\hdots,h, \\
 \quad & p_{b_k} = y_{b_k}, \hspace{2mm} \forall k=1,2,\hdots,h, \\
\quad &  {p_e}_k \in \mathcal{P}_e, \hspace{3mm} \forall k=1,2,\hdots,h, \\ \quad & {p_b}_k  \in \mathcal{P}_b, \hspace{3mm} \forall k=1,2,\hdots,h,
\end{aligned}
\end{equation}
where $y_{e_k},y_{b_k} \in \mathbb{R}$ are the dummy variables introduced to split the problem. The optimization problem in (\ref{MPC_HEV_ADMM_REWRITTEN}) is written more compactly as
\begin{equation}\label{MPC_HEV_ADMM_EQUIV}
\begin{aligned}
\Minimize_{\mathbf{y}_e,\mathbf{y}_b,\mathbf{p}_e,\mathbf{p}_b} \quad &  \frac{\alpha}{2}\norm{{\mathbf{y}_e}+{\mathbf{y}_b}-\hat{\mathbf{p}}_{d}}_2^2+C_e({\mathbf{p}_e})+C_b({\mathbf{p}_b})   \\
\SubjectTo \quad & \mathbf{p}_{e}  = \mathbf{y}_{e}, 
   \mathbf{p}_{b} = \mathbf{y}_{b},  \\
 &  {\mathbf{p}_e} \in \mathcal{P}_e,  {\mathbf{p}_b}  \in \mathcal{P}_b,
\end{aligned}
\end{equation}
where $\mathbf{y}_e \triangleq \left[\begin{array}{ccccc} {y_{e_1},y_{e_{2}},\hdots,y_{e_{h}}} \end{array}\right]^\top \in \mathbb{R}^h$, $\mathbf{y}_b \triangleq \left[\begin{array}{cccc} {y_{b_1},y_{b_{2}},\hdots,y_{b_{h}}} \end{array}\right]^\top \in \mathbb{R}^h$, $\sum_{k=1}^{h}C_e(p_{e_k})\triangleq C_e(\mathbf{p}_e)$ and $\sum_{k=1}^{h}C_b(p_{b_k})\triangleq C_b(\mathbf{p}_b)$. for a given parameter $\rho>0$, the augmented Lagrangian for the optimization problem in (\ref{MPC_HEV_ADMM_EQUIV}) is given as \cite{boyd}:
\begin{align*}\nonumber
 \mathcal{L}_\rho(\mathbf{y}_e,\mathbf{y}_b,\mathbf{p}_e,\mathbf{p}_b,\boldsymbol{\lambda}_e,\boldsymbol{\lambda}_b) = \frac{\alpha}{2}\norm{{\mathbf{y}_e}+{\mathbf{y}_b}-\hat{\mathbf{p}}_{d}}_2^2+ C_e({\mathbf{p}_e})\\+C_b({\mathbf{p}_b}) +\boldsymbol{\lambda}_e^\top (\mathbf{p}_e-\mathbf{y}_e)+\boldsymbol{\lambda}_b^\top (\mathbf{p}_b-\mathbf{y}_b)\\+\frac{\rho}{2}\norm{\mathbf{p}_e-\mathbf{y}_e}_2^2+\frac{\rho}{2}\norm{\mathbf{p}_b-\mathbf{y}_b}_2^2+\mathcal{I_\mathcal{P}}_e+\mathcal{I_\mathcal{P}}_b,
\end{align*}
where $\mathcal{I_\mathcal{P}}_e$ and $\mathcal{I_\mathcal{P}}_b$ are the indicator functions for the inclusion constraints  $\mathbf{p}_e \in \mathcal{P}_e$ and $\mathbf{p}_b \in \mathcal{P}_b$ respectively. Equivalently, the Lagrangian in the scaled form is given as \cite{boyd}
\begin{align*}\nonumber\label{Scaled_ADMM_Rewritten}
    \mathcal{L}_\rho(\mathbf{y}_e,\mathbf{y}_b,\mathbf{p}_e,\mathbf{p}_b,\mathbf{u}_e,\mathbf{u}_b) = \frac{\alpha}{2}\norm{{\mathbf{y}_e}+{\mathbf{y}_b}-\hat{\mathbf{p}}_{d}}_2^2+ C_e({\mathbf{p}_e})\\+C_b({\mathbf{p}_b})+\frac{\rho}{2}\norm{\mathbf{p}_e-\mathbf{y}_e+\mathbf{u}_e}_2^2\\+\frac{\rho}{2}\norm{\mathbf{p}_b-\mathbf{y}_b+\mathbf{u}_b}_2^2-\frac{\rho}{2}\norm{\mathbf{u}_e}_2^2-\frac{\rho}{2}\norm{\mathbf{u}_b}_2^2+\mathcal{I_\mathcal{P}}_e+\mathcal{I_\mathcal{P}}_b,
    \end{align*}
where $\mathbf{u}_e\triangleq{\frac{1}{\rho}}\boldsymbol{\lambda}_e$ and $\mathbf{u}_b\triangleq{\frac{1}{\rho}}\boldsymbol{\lambda}_b$. 

 Consequently, following the Gauss-Seidel alternating minimization, one obtains the following successive iterative scheme \cite{boyd}:
\begin{subequations}\label{HEV_ADMM_MPC}
\begin{align}
\mathbf{y}_e^{t+1} &= \argmin_{\mathbf{y}_e} \bigg\{\frac{\alpha}{2}\norm{{\mathbf{y}_e}+{\mathbf{y}_b}-\hat{\mathbf{p}}_{d}}_2^2+\frac{\rho}{2}\norm{\mathbf{p}_e^t-\mathbf{y}_e+\mathbf{u}_e^t}_2^2\bigg\}, \\
\mathbf{y}_b^{t+1} &= \argmin_{\mathbf{y}_b}  \bigg\{\frac{\alpha}{2}\norm{{\mathbf{y}_e}+{\mathbf{y}_b}-\hat{\mathbf{p}}_{d}}_2^2+\frac{\rho}{2}\norm{\mathbf{p}_b^t-\mathbf{y}_b+\mathbf{u}_b^t}_2^2\bigg\},\\
\mathbf{p}_e^{t+1} &= \argmin_{\mathbf{p}_e \in \mathcal{P}_e}  \bigg\{C_e(\mathbf{p}_e)+\frac{\rho}{2}\norm{\mathbf{p}_e-\mathbf{y}_e^{t+1}+\mathbf{u}_e^t}_2^2\bigg\}, \\
\mathbf{p}_b^{t+1} &= \argmin_{\mathbf{p}_b \in \mathcal{P}_b} \bigg\{C_b(\mathbf{p}_b)+\frac{\rho}{2}\norm{\mathbf{p}_b-\mathbf{y}_b^{t+1}+\mathbf{u}_b^t}_2^2\bigg\}, \\
 \mathbf{u}_e^{t+1} &= \mathbf{u}_e^t+\mathbf{p}_e^{t+1}-\mathbf{y}_e^{t+1}, \\
  \mathbf{u}_b^{t+1} &= \mathbf{u}_b^t+\mathbf{p}_b^{t+1}-\mathbf{y}_b^{t+1}.
\end{align}
\end{subequations}

The convergence of iterations in (\ref{HEV_ADMM_MPC}) is a well-studied topic and is available in \cite{boyd}. The optimization problems in (\ref{HEV_ADMM_MPC}a) and (\ref{HEV_ADMM_MPC}b) are unconstrained quadratic programs and a closed-form solution can be readily obtained.

\begin{theorem}\label{thm:closed_form}
The optimization steps in (\ref{HEV_ADMM_MPC}a) and (\ref{HEV_ADMM_MPC}b) are equivalent to:
\begin{align*}
& \mathbf{y}_e^{t+1}=\mathbf{p}_e^t+\mathbf{u}_e^t+\mathbf{a}^t, \\
& \mathbf{y}_b^{t+1}=\mathbf{p}_b^t+\mathbf{u}_b^t+\mathbf{a}^t,
\end{align*}
where $$\mathbf{a}^t = \frac{\alpha}{2\alpha+\rho}\bigg(\hat{\mathbf{p}}_d-(\mathbf{p}_e^t+\mathbf{p}_b^t+\mathbf{u}_e^t+\mathbf{u}_b^t)\bigg).$$
\end{theorem}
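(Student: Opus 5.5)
The plan is to treat (\ref{HEV_ADMM_MPC}a) and (\ref{HEV_ADMM_MPC}b) together as a single $\mathbf{y}$-block update of the two-block ADMM: one joint minimization of
$$
\Phi(\mathbf{y}_e,\mathbf{y}_b)=\frac{\alpha}{2}\norm{\mathbf{y}_e+\mathbf{y}_b-\hat{\mathbf{p}}_d}_2^2+\frac{\rho}{2}\norm{\mathbf{c}_e-\mathbf{y}_e}_2^2+\frac{\rho}{2}\norm{\mathbf{c}_b-\mathbf{y}_b}_2^2
$$
over $(\mathbf{y}_e,\mathbf{y}_b)$. Here I abbreviate $\mathbf{c}_e\triangleq\mathbf{p}_e^t+\mathbf{u}_e^t$ and $\mathbf{c}_b\triangleq\mathbf{p}_b^t+\mathbf{u}_b^t$. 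The stated formula is symmetric in $e$ and $b$ and uses only iteration-$t$ quantities, so it can only match this joint (block) reading. A literal Gauss--Seidel pass would feed $\mathbf{y}_e^{t+1}$ into (\ref{HEV_ADMM_MPC}b) and produce a different, asymmetric expression. I would state this interpretation explicitly at the start.

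First, I would observe that $\Phi$ is strictly convex. Its Hessian is $\alpha\begin{bmatrix}1&1\\1&1\end{bmatrix}\otimes I_h+\rho I_{2h}\succ 0$ because $\rho>0$. Hence the minimizer exists, is unique, and is characterized by the first-order conditions:
$$
\alpha(\mathbf{y}_e+\mathbf{y}_b-\hat{\mathbf{p}}_d)-\rho(\mathbf{c}_e-\mathbf{y}_e)=\underline{\mathbf{0}},\qquad
\alpha(\mathbf{y}_e+\mathbf{y}_b-\hat{\mathbf{p}}_d)-\rho(\mathbf{c}_b-\mathbf{y}_b)=\underline{\mathbf{0}}.
$$

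Second, subtracting the two equations gives $\mathbf{y}_e-\mathbf{c}_e=\mathbf{y}_b-\mathbf{c}_b$. I would name this common vector $\mathbf{a}$, so that $\mathbf{y}_e=\mathbf{c}_e+\mathbf{a}$ and $\mathbf{y}_b=\mathbf{c}_b+\mathbf{a}$. This already gives the claimed structure of the update.

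Third, substituting back into either equation yields
$$
\alpha(2\mathbf{a}+\mathbf{c}_e+\mathbf{c}_b-\hat{\mathbf{p}}_d)+\rho\mathbf{a}=\underline{\mathbf{0}},
$$
so that
$$
\mathbf{a}=\frac{\alpha}{2\alpha+\rho}\bigl(\hat{\mathbf{p}}_d-(\mathbf{p}_e^t+\mathbf{p}_b^t+\mathbf{u}_e^t+\mathbf{u}_b^t)\bigr)=\mathbf{a}^t.
$$
Uniqueness from the first step then completes the argument.

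The algebra is routine. The main obstacle is the interpretive point above: justifying that (a)--(b) are to be solved jointly rather than sequentially. I would handle it by appealing to the standard two-block ADMM of \cite{boyd}, with blocks $(\mathbf{y}_e,\mathbf{y}_b)$ and $(\mathbf{p}_e,\mathbf{p}_b)$. Its convergence guarantee, invoked in the paper, applies to exactly this joint update.
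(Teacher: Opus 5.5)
Your proposal is correct and follows the paper's argument. The paper also treats (\ref{HEV_ADMM_MPC}a)--(\ref{HEV_ADMM_MPC}b) as a joint minimization over $(\mathbf{y}_e,\mathbf{y}_b)$ (its optimality conditions contain $\mathbf{y}_e^*+\mathbf{y}_b^*$) and writes the same two first-order conditions; the only difference is that the paper stacks them into a $2h\times 2h$ system and solves it with the matrix inversion lemma, whereas you eliminate by subtracting the two equations, which is shorter and, together with your strict-convexity remark, also makes uniqueness of the update explicit.
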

\begin{proof} The first order optimality conditions for (\ref{HEV_ADMM_MPC}a) and (\ref{HEV_ADMM_MPC}b) are:
\begin{subequations}\label{Optimality}
\begin{align}
    \alpha\bigg(\mathbf{y}_{e}^*+\mathbf{y}_{b}^*-\hat{\mathbf{p}}_{d}\bigg)-\rho\bigg(\mathbf{p}_e^t-\mathbf{y}_e^*+\mathbf{u}_e^t\bigg) = 0, \\
    \alpha\bigg(\mathbf{y}_{e}^*+\mathbf{y}_{b}^*-\hat{\mathbf{p}}_{d}\bigg)-\rho\bigg(\mathbf{p}_b^t-\mathbf{y}_b^*+\mathbf{u}_b^t\bigg) = 0.
\end{align}
\end{subequations}
Rewriting the above equation in the matrix form combining both (\ref{Optimality}a) and (\ref{Optimality}b) yields: 
\begin{align}
   \bigg(\frac{\alpha}{\rho}\begin{bmatrix} I_h \\ I_h \end{bmatrix}\begin{bmatrix}I_h \\ I_h \end{bmatrix}^\top+\begin{bmatrix}I_h \\ I_h \end{bmatrix}\bigg)\begin{bmatrix} \mathbf{y}_e^* \\ \mathbf{y}_b^* \end{bmatrix} = \frac{\alpha}{\rho} \begin{bmatrix} \hat{\mathbf{p}}_d \\ \hat{\mathbf{p}}_d \end{bmatrix}+\begin{bmatrix} \mathbf{p}_e^t \\ \mathbf{p}_b^t \end{bmatrix}+\begin{bmatrix} \mathbf{u}_e^t \\ \mathbf{u}_b^t \end{bmatrix}.
\end{align}
Using the matrix inversion lemma, it follows that
\begin{align*}\label{Agg_sub}
    \mathbf{y}_e^* = \mathbf{p}_e^t + \mathbf{u}_e^t  +\frac{\alpha}{2\alpha+\rho}\bigg(\hat{\mathbf{p}}_d-(\mathbf{p}_e^t+\mathbf{p}_b^t+\mathbf{u}_e^t+\mathbf{u}_b^t)\bigg) \\
    \mathbf{y}_b^* = \mathbf{p}_b^t + \mathbf{u}_b^t  +\underbrace{\frac{\alpha}{2\alpha+\rho}\bigg(\hat{\mathbf{p}}_d-(\mathbf{p}_e^t+\mathbf{p}_b^t+\mathbf{u}_e^t+\mathbf{u}_b^t)\bigg)}_{ \triangleq \mathbf{a}^t}
\end{align*}
\end{proof}

From Theorem~\ref{thm:closed_form}, it is seen that $\mathbf{a}^t$ acts as an aggregator or a communicator between the engine and the battery nodes updating the dual variable. Thus each node implements the following successive optimization problems at each iteration:
\begin{subequations}\label{Optimization_Updates}
\begin{align}
\mathbf{y}_e^{t+1} &= \mathbf{p}_e^t + \mathbf{u}_e^t  +\mathbf{a}^t, \\
\mathbf{y}_b^{t+1} &= \mathbf{p}_b^t + \mathbf{u}_b^t  +\mathbf{a}^t,\\
\mathbf{p}_e^{t+1} &= \argmin_{\mathbf{p}_e \in \mathcal{P}_e}  \bigg\{C_e(\mathbf{p}_e)+\frac{\rho}{2}\norm{\mathbf{p}_e-\mathbf{y}_e^{t+1}+\mathbf{u}_e^t}_2^2\bigg\}, \\
\mathbf{p}_b^{t+1} &= \argmin_{\mathbf{p}_b \in \mathcal{P}_b}  \bigg\{C_b(\mathbf{p}_b)+\frac{\rho}{2}\norm{\mathbf{p}_b-\mathbf{y}_b^{t+1}+\mathbf{u}_b^t}_2^2\bigg\}, \\
 \mathbf{u}_e^{t+1} &= \mathbf{u}_e^t+\mathbf{p}_e^{t+1}-\mathbf{y}_e^{t+1}, \\
  \mathbf{u}_b^{t+1} &= \mathbf{u}_b^t+\mathbf{p}_b^{t+1}-\mathbf{y}_b^{t+1}.
\end{align}
\end{subequations}
\begin{proposition}\label{prop:optimization_updates}
    \textit{The iterative scheme in (\ref{Optimization_Updates}a-\ref{Optimization_Updates}f) is equivalent to the  dynamic updates}:
  \begin{subequations} 
   \begin{align}\label{eqn:eng_opt}
   \begin{array}{rl}
   \mathbf{p}_e^{t+1} &= \argmin\limits_{\mathbf{p}_e\in\mathcal{P}_e} \bigg\{C_e(\mathbf{p}_e)+\frac{\rho}{2}\norm{\mathbf{p}_e-\mathbf{p}_e^t-\mathbf{a}^t}_2^2\bigg\}, \\ 
   \mathbf{z}_e^{t+1} &= 2\mathbf{p}_e^{t+1}-\mathbf{p}_e^t-\mathbf{a}^t, 
   \end{array}\\\nonumber\\\label{eqn:bat_opt}
   \begin{array}{rl}
   \mathbf{p}_b^{t+1} &= \argmin\limits_{\mathbf{p}_b\in\mathcal{P}_b} \bigg\{C_b(\mathbf{p}_b)+\frac{\rho}{2}\norm{\mathbf{p}_b-\mathbf{p}_b^t-\mathbf{a}^t}_2^2\bigg\}, \\ 
   \mathbf{z}_b^{t+1} &= 2\mathbf{p}_b^{t+1}-\mathbf{p}_b^t-\mathbf{a}^t, 
   \end{array}\\\nonumber\\\label{eq:agg}
   \mathbf{a}^t = \frac{\alpha}{2\alpha + \rho}\left(\hat{\mathbf{p}}_d - \mathbf{z}_e^t - \mathbf{z}_b^t\right),\hspace{2cm}
   \end{align}
   where $\mathbf{z}^t \triangleq \mathbf{p}^t + \mathbf{u}^t$.
  \end{subequations}
\end{proposition}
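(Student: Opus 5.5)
The plan is a direct substitution argument combined with an induction on the iteration index $t$. Both schemes should be viewed as maps on the state: $(\mathbf{p}_e^t,\mathbf{p}_b^t,\mathbf{u}_e^t,\mathbf{u}_b^t)$ for (\ref{Optimization_Updates}a--\ref{Optimization_Updates}f), and $(\mathbf{p}_e^t,\mathbf{p}_b^t,\mathbf{z}_e^t,\mathbf{z}_b^t)$ for the proposition. The change of variables $\mathbf{z}^t=\mathbf{p}^t+\mathbf{u}^t$ is invertible, with $\mathbf{u}^t=\mathbf{z}^t-\mathbf{p}^t$. So it suffices to show the following: if the two states correspond at iteration $t$, then one step of each scheme produces corresponding states at iteration $t+1$. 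The base case is immediate by initializing $\mathbf{z}^0\triangleq\mathbf{p}^0+\mathbf{u}^0$.

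\textbf{Step 1 (aggregator).} By Theorem~\ref{thm:closed_form}, $\mathbf{a}^t$ depends on the state only through $\mathbf{p}_e^t+\mathbf{u}_e^t+\mathbf{p}_b^t+\mathbf{u}_b^t$. This sum equals $\mathbf{z}_e^t+\mathbf{z}_b^t$, which gives (\ref{eq:agg}) directly. The same reading shows that (\ref{Optimization_Updates}a) and (\ref{Optimization_Updates}b) say $\mathbf{y}_e^{t+1}=\mathbf{z}_e^t+\mathbf{a}^t$ and $\mathbf{y}_b^{t+1}=\mathbf{z}_b^t+\mathbf{a}^t$.

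\textbf{Step 2 (local primal updates).} Substitute this expression for $\mathbf{y}_e^{t+1}$ into the quadratic penalty of (\ref{Optimization_Updates}c):
\[
\mathbf{p}_e-\mathbf{y}_e^{t+1}+\mathbf{u}_e^t=\mathbf{p}_e-\mathbf{p}_e^t-\mathbf{u}_e^t-\mathbf{a}^t+\mathbf{u}_e^t=\mathbf{p}_e-\mathbf{p}_e^t-\mathbf{a}^t.
\]
The objectives of (\ref{Optimization_Updates}c) and the first line of (\ref{eqn:eng_opt}) therefore coincide as functions of $\mathbf{p}_e$ over the same feasible set $\mathcal{P}_e$, so their minimizers agree. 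The same computation handles the battery node.

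The one point needing care is that ``$\argmin$'' must be single valued, so that equality of objectives gives equality of iterates. I would argue this from strong convexity of the $\frac{\rho}{2}\norm{\cdot}_2^2$ term, assuming $C_e,C_b$ convex and $\mathcal{P}_e,\mathcal{P}_b$ nonempty closed convex polytopes. Alternatively, the equivalence can be stated at the level of solution sets.

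\textbf{Step 3 (dual/auxiliary update).} From (\ref{Optimization_Updates}e),
\[
\mathbf{u}_e^{t+1}=\mathbf{u}_e^t+\mathbf{p}_e^{t+1}-\mathbf{z}_e^t-\mathbf{a}^t=\mathbf{p}_e^{t+1}-\mathbf{p}_e^t-\mathbf{a}^t.
\]
Adding $\mathbf{p}_e^{t+1}$ gives $\mathbf{z}_e^{t+1}=2\mathbf{p}_e^{t+1}-\mathbf{p}_e^t-\mathbf{a}^t$, which is the second line of (\ref{eqn:eng_opt}); the battery case is analogous.

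\textbf{Converse and closing the induction.} Given the proposition's iterates, define $\mathbf{u}^{t}=\mathbf{z}^{t}-\mathbf{p}^{t}$ and $\mathbf{y}^{t+1}=\mathbf{z}^t+\mathbf{a}^t$, and reverse Steps 1--3. This shows the proposition's iterates satisfy (\ref{Optimization_Updates}a--f), which closes the induction in both directions.

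I expect no real computational obstacle. The main subtlety is bookkeeping: the iterates $\mathbf{y}^{t+1}$ are eliminated entirely, so the proposition's scheme carries only $(\mathbf{p},\mathbf{z})$, and the initialization must be matched accordingly.
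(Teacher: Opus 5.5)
Your proposal is correct and is essentially the paper's proof. Substituting $\mathbf{y}^{t+1}=\mathbf{p}^t+\mathbf{u}^t+\mathbf{a}^t$ from Theorem~\ref{thm:closed_form} into the local $\mathbf{p}$-updates and the scaled dual updates cancels the $\mathbf{u}^t$ terms and gives $\mathbf{z}^{t+1}=2\mathbf{p}^{t+1}-\mathbf{p}^t-\mathbf{a}^t$, with the aggregator following from $\mathbf{p}_e^t+\mathbf{u}_e^t+\mathbf{p}_b^t+\mathbf{u}_b^t=\mathbf{z}_e^t+\mathbf{z}_b^t$. Your induction, converse direction, and remark on the $\argmin$ are bookkeeping the paper leaves implicit. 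Uniqueness is not actually needed: the two objectives are the same function on the same feasible set, so their solution sets are identical.
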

\begin{remark}
  Proposition~\ref{prop:optimization_updates} expresses the iterative scheme in (\ref{Optimization_Updates}a-\ref{Optimization_Updates}f) to distributed dynamic updates where \eqref{eqn:eng_opt} and \eqref{eqn:bat_opt} are parallel operations for the engine and battery nodes optimization respectively, and \eqref{eq:agg} is the aggregator that couples them via the dual updates. This is shown schematically in Fig~\ref{HEV_Optimization}.
\end{remark}
\begin{proof} Substituting (\ref{Optimization_Updates}a) in (\ref{Optimization_Updates}e) and (\ref{Optimization_Updates}b) in (\ref{Optimization_Updates}f) we get,
    \begin{align*}\mathbf{z}_e^{t+1}&=\mathbf{p}_e^{t+1}+\mathbf{u}_e^{t}+\mathbf{p}_e^{t+1}-\mathbf{p}_e^t-\mathbf{u}_e^t-\mathbf{a}^t, \\ &= 2\mathbf{p}_e^{t+1}-\mathbf{p}_e^t-\mathbf{a}^t. \end{align*}
    \begin{align*}\mathbf{z}_b^{t+1}&=\mathbf{p}_b^{t+1}+\mathbf{u}_b^{t}+\mathbf{p}_b^{t+1}-\mathbf{p}_b^t-\mathbf{u}_b^t-\mathbf{a}^t, \\ &= 2\mathbf{p}_b^{t+1}-\mathbf{p}_b^t-\mathbf{a}^t. \end{align*}
    substituting (\ref{Optimization_Updates}a) in (\ref{Optimization_Updates}c) and (\ref{Optimization_Updates}b) in (\ref{Optimization_Updates}d) we get:
    \begin{align*}
        \mathbf{p}_e^{t+1} = \argmin_{\mathbf{p}_e \in \mathcal{P}_e}\left\{C_e(\mathbf{p}_e)+\frac{\rho}{2}\norm{\mathbf{p}_e-\mathbf{p}_e^t-\mathbf{a}^t}_2^2\right\} \\
        \mathbf{p}_b^{t+1} = \argmin_{\mathbf{p}_b \in \mathcal{P}_b}\left\{C_b(\mathbf{p}_b)+\frac{\rho}{2}\norm{\mathbf{p}_b-\mathbf{p}_b^t-\mathbf{a}^t}_2^2\right\},
    \end{align*}
    thus completing the proof.
\end{proof}

Next, the individual nodal level optimization problems are given, with explicitly specified choices for the functions $C_e$, $C_b$, and node-specific inclusion constraints for the respective nodes.

\subsection{Optimization Problem at Engine Node}
The optimization problem at the engine node is given as:
\begin{equation} \label{Engine_Node}
\begin{aligned}
\Minimize_{\mathbf{p}_e} \quad &  \frac{\beta}{2}\norm{\mathbf{p}_e-\mathbf{p}_e^r}_2^2+\frac{\rho}{2}\norm{\mathbf{p}_e-\mathbf{p}_e^t-\mathbf{a}^t}_2^2 \\
\SubjectTo \quad & \underline{\mathbf{p}}_e \preceq \mathbf{p}_e \preceq \overline{\mathbf{p}}_e, \\
\quad & \begin{bmatrix}  D \\ -D  \end{bmatrix} \mathbf{p}_e \preceq r_e\mathbf{1},
\end{aligned} 
\end{equation}
where 
$$D \triangleq
I_h - 
\begin{pNiceArray}{c|c}
  \underline{\mathbf{0}}^\top & 0 \\
  \hline
  I_{h-1} & \underline{\mathbf{0}}
\end{pNiceArray} \in \mathbb{R}^{h \times h},
$$ 
is the point-wise difference operator through the prediction horizon, $r_e$ is the ramp-rate limit of the engine pertaining to the engine torque limitation, $\mathbf{p}_e^r$ is the most efficient operating point of the engine chosen based on the engine efficiency data, and $\overline{\mathbf{p}}_e$ and $\underline{\mathbf{p}}_e$ are the upper and lower limits on engine power respectively. Here, the choice $C_e(\mathbf{p}_e) =\frac{\beta}{2}\norm{\mathbf{p}_e-\mathbf{p}_e^r}_2^2$ penalizes the deviation from the known efficient operation of the engine. The penalty factor $\beta$ is a tuning parameter to weigh the relative importance of the engine efficiency cost against the algorithm convergence speed.
\subsection{Optimization problem at Battery Node}
The optimization problem at the battery node is given as:
\begin{equation}\label{Battery_Node}
\begin{aligned}
\Minimize_{\mathbf{p}_b,\mathbf{q}} \quad  & \frac{\gamma}{2}\norm{\mathbf{p}_b}_2^2+\frac{\rho}{2}\norm{\mathbf{p}_b-\mathbf{p}_b^t-\mathbf{a}^t}_2^2\\
\SubjectTo \quad & \sum_{k=1}^{h}\mathbf{p}_b = \kappa(\mathbf{q}_0-\mathbf{q}_h), \\
&  \underline{\mathbf{p}}_b \preceq \mathbf{p}_b \preceq \overline{\mathbf{p}}_b,\\ 
& \underline{\mathbf{q}} \preceq \mathbf{q} \preceq \overline{\mathbf{q}}, \\
& \begin{bmatrix}  D \\ -D  \end{bmatrix} \mathbf{p}_b \preceq r_b\mathbf{1},
\end{aligned} 
\end{equation}
where $C_b(\mathbf{p}_b) = \frac{\gamma}{2}\norm{\mathbf{p}_b}^2$ is the cost function minimizing the battery power and $\gamma$ is the penalty factor that is tuned to weight the importance of battery usage, $q_0$ is the initial state of charge measurement from the system.  $\kappa = Q_Tv_b/T_s$,  $\overline{\mathbf{p}}_b$ and $\underline{\mathbf{p}}_b$ represent upper and lower limits on battery power respectively, $\mathbf{q}_0$ and $\mathbf{q}_h$ represent the initial SoC and the SoC at the end for the length of the horizon, $\underline{\mathbf{q}}$ and $\overline{\mathbf{q}}$ represent the upper and lower SoC limitations. $r_b$ represents the ramp-rate limitation of the battery. 

\begin{figure}[t!]
      \centering
      \includegraphics[width=0.75\columnwidth]{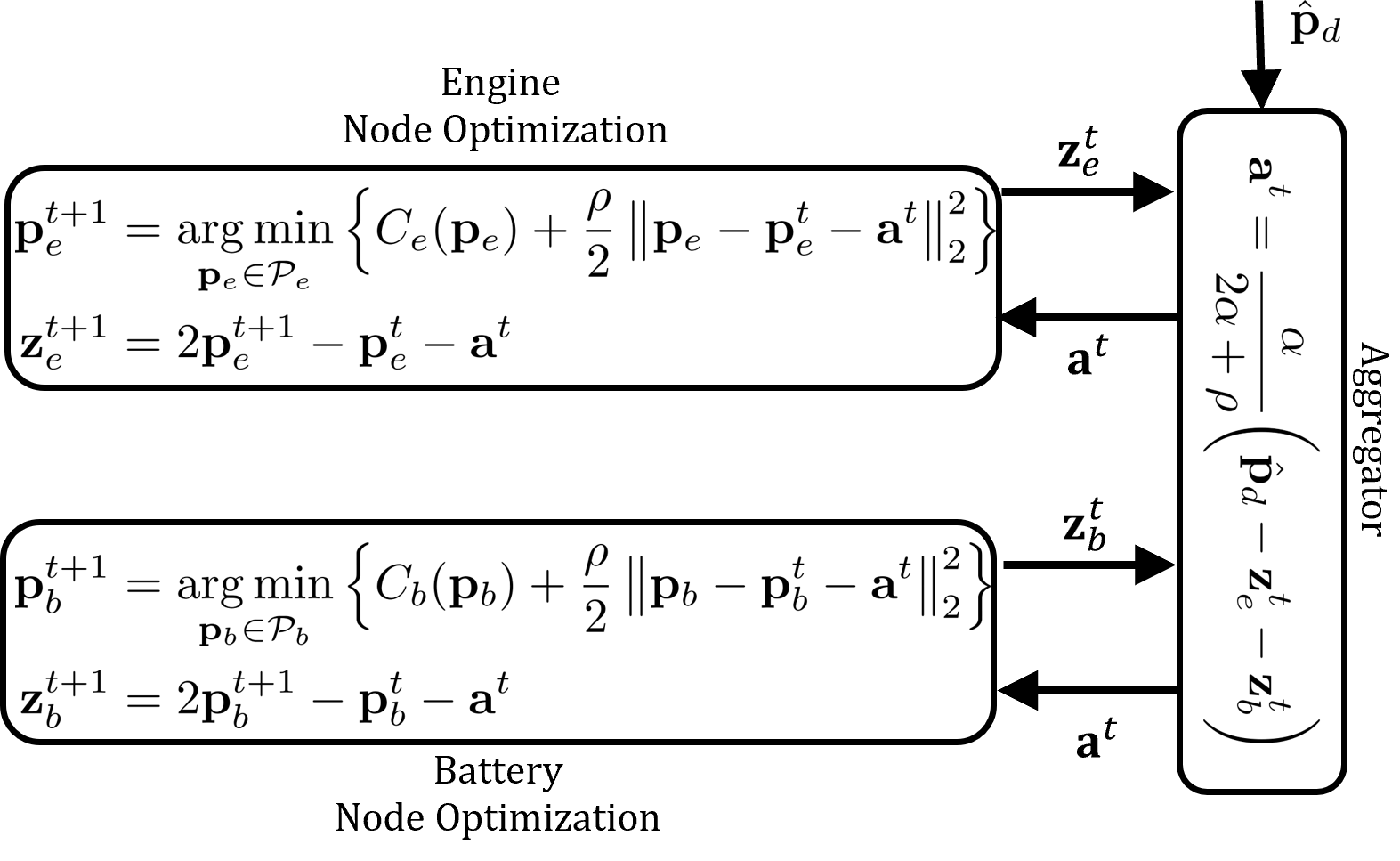}
	 \caption{Distributed optimization implementation scheme}
     \label{HEV_Optimization} 
\end{figure}

\begin{figure*}[t!]
\centering
\centerline{\includegraphics[width=0.99\columnwidth]{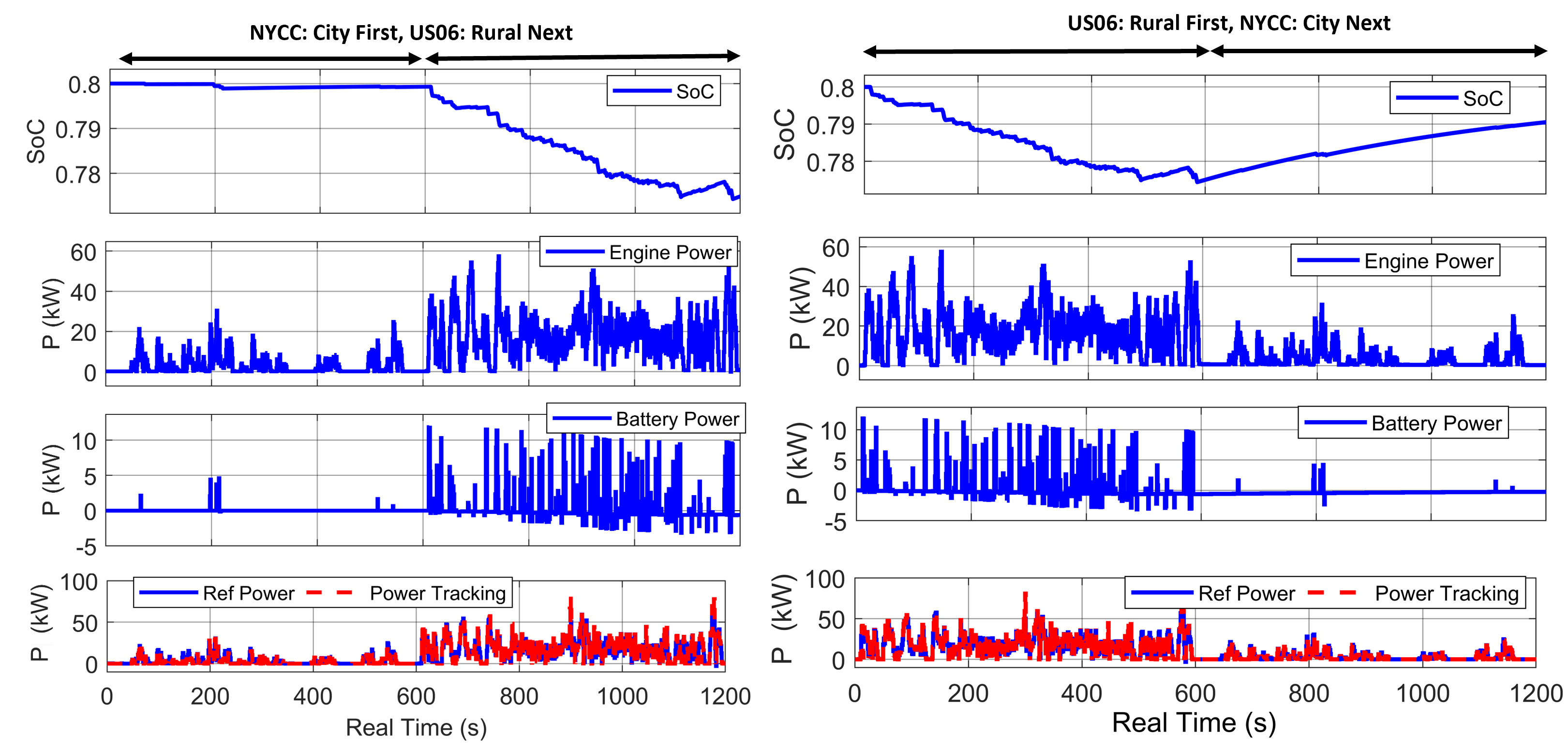}}
\caption{State of charge, engine optimal power, battery optimal power, and the power tracking for one simulation run for US06 drive-cycle and NYCC drive-cycle swapping the driving profiles.}
\label{Power_Track}
\end{figure*}

\begin{figure}[t!]
\centering
\includegraphics[width=0.85\columnwidth]{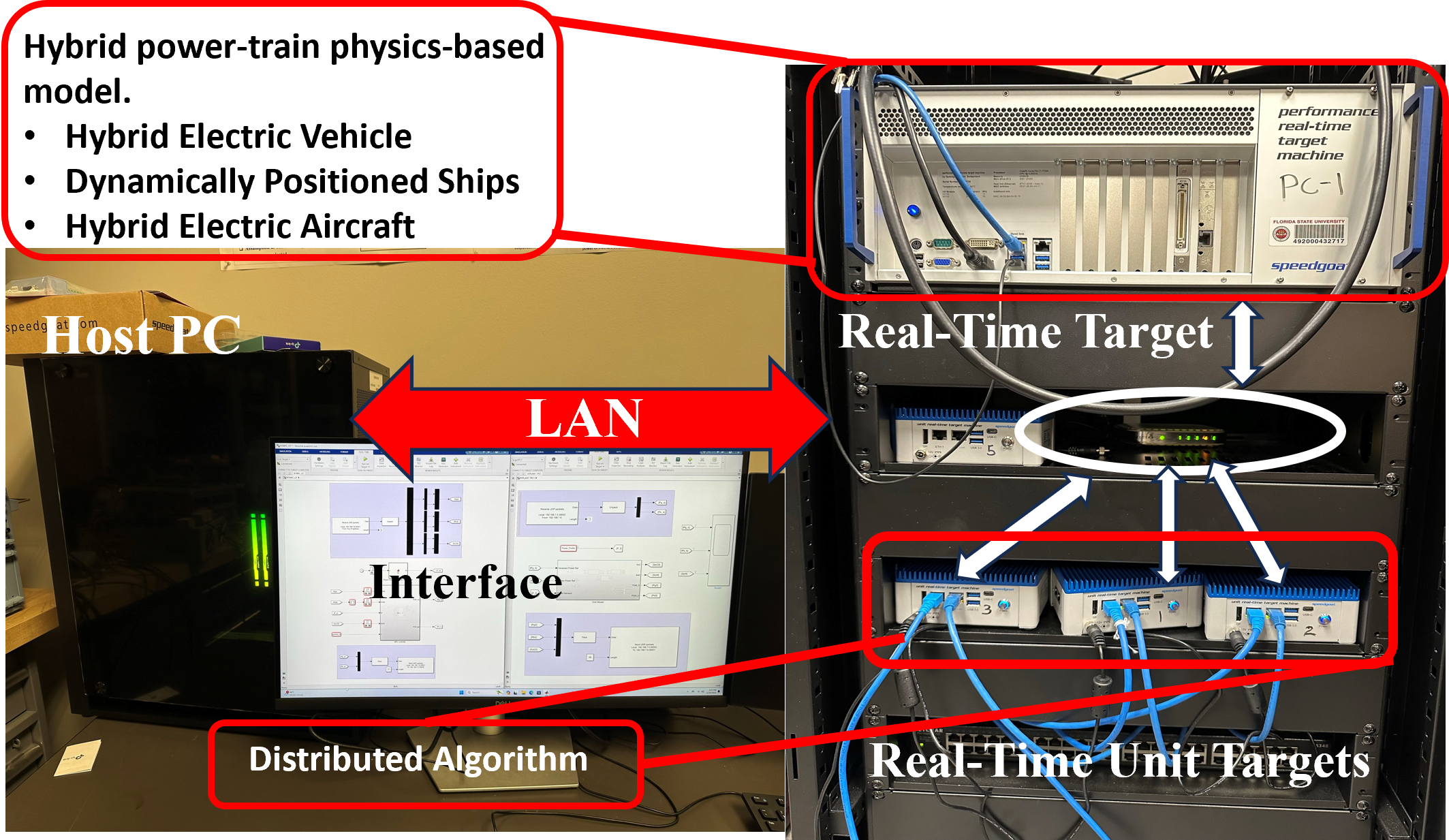}
\caption{Real-time Setup}
\label{RT_Setup}
\end{figure}

\section{Real-Time Numerical Simulation - Hybrid Electric Vehicle, dynamically positioned hybrid ship and hybrid electric aircraft}\label{Sec: Simulation}

We present three real-time simulation test cases using hybrid electric vehicles (hybrid road vehicles), dynamically positioned ships (hybrid surface vehicles), and hybrid electric aircraft (hybrid aerial vehicles) to test the developed distributed algorithm. The real-time simulation setup consists of a Host PC connected to a real-time performance target machine (SPEEDGOAT in this case) and three other smaller real-time unit target machines over a local area network (LAN) connection (1000 MBPS) receive/transmit link speed through a switch. Fig.~\ref{RT_Setup} shows the setup in the lab. The longitudinal dynamics are implemented in the target machine and the distributed algorithm is deployed on the unit targets. The computational capabilities of the devices are provided in Table-\ref{Comp_info}. 

\begin{table}[h!]
\centering
\caption{Computational Info for RT-Setup}
\label{Comp_info}
\resizebox{0.9\columnwidth}{!}{\begin{tabular}{c|c|c}
\hline \hline
\textbf{Device Name}    & \textbf{CPU} & \textbf{RAM}  \\
 \hline \hline
Host PC               & Intel Core i9 3.20GHz 24-core & 64GB   \\
Target    & Intel Core 3.6GHz, 8-core          & 32GB \\
Unit Target-1  & Intel Atom x5-E3940 1.6GHz 4-core & 4GB \\
\hline
\end{tabular}}
\end{table}

\begin{table}[h!]
\centering
\caption{Table Showing Rated Powers and Limits of the Engine and the Battery}
\label{tab:rated}
\resizebox{0.7\columnwidth}{!}{\begin{tabular}{c|c}
\hline \hline
\textbf{Parameter}   & \textbf{Value}  \\
\hline \hline
Rated Engine Power & 150\textsf{hp} or 112\textsf{kW}   \\
Rated Battery Power & 15\textsf{kW}  \\
Desired Operating Engine Power  & 75\textsf{kW}  \\
Engine Upper Power Limit    & 100\textsf{kW}\\ 
Battery Upper Power Limit & 14\textsf{kW} \\
Battery Lower Power Limit & -14\textsf{kW} \\
State of Charge Lower Limit & 0.5 \\
State of Charge Upper Limit & 0.8 \\
 \hline
\end{tabular}}
\end{table}

\subsection{Hybrid Electric Vehicle}
The proposed algorithm along with the developed model of a HEV is tested in a real-time MATLAB-Simulink environment. The optimization problems presented in (\ref{Engine_Node}) and (\ref{Battery_Node}) are used in the simulation process. The parameters for the HEV drive-line are chosen as $m$ = 1400\textsf{kg}, $r$ = 0.2\textsf{m}, $\rho_d$ = 1.225$\textsf{Kg-m}^{-2}$, $C_d$ = 0.35, $A$ = 1.93$\textsf{m}^2$, $\mu_{rr}$ = 0.03. The optimization parameters which are fixed across the simulations are: $\alpha=1000$, $\beta=1$, $\rho=0.1$, and $h=5$\textsf{seconds}. The simulation is run at a fixed time-step of $1$\textsf{ms}, with the optimization problem running every $1$\textsf{second}. The power ratings and the power limits of the engine and the battery are given in Table \ref{tab:rated}.

\begin{figure}[t!]
\centering
\centerline{\includegraphics[width=0.85\columnwidth]{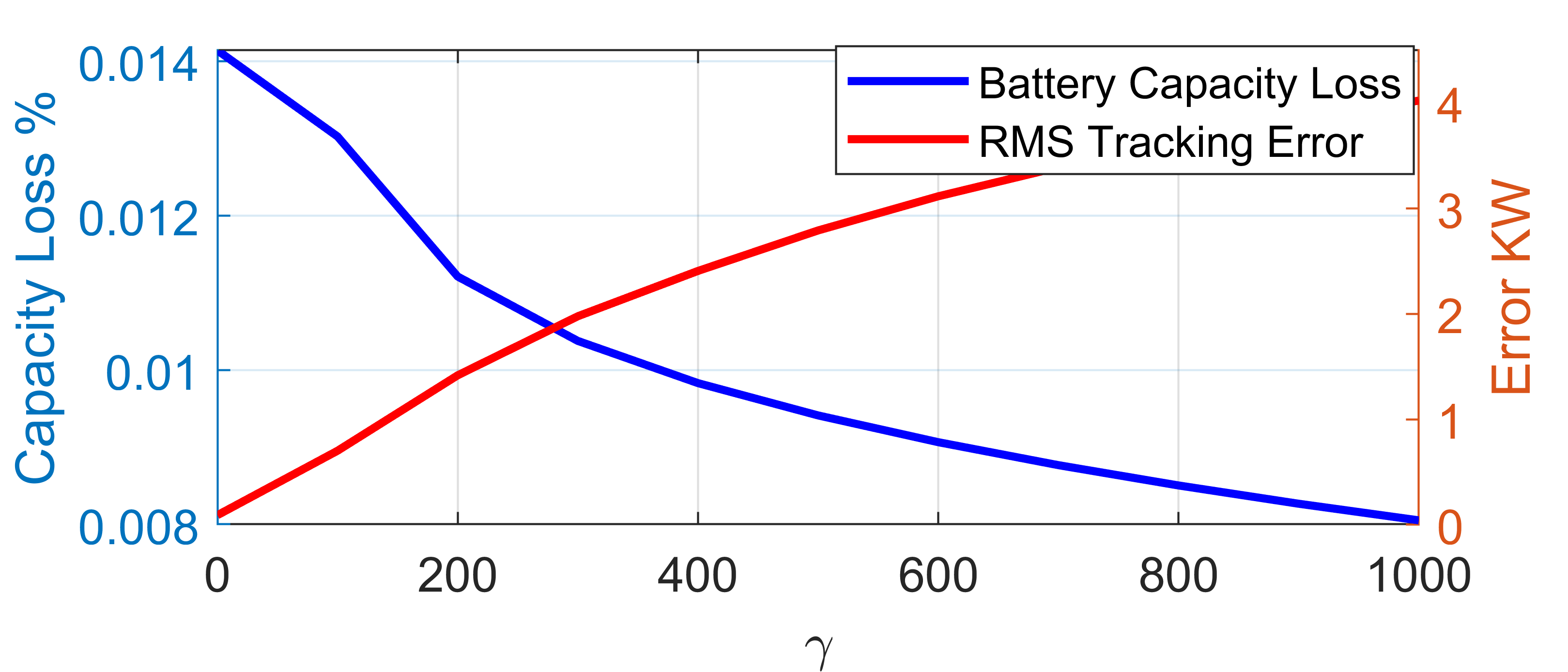}}
\caption{Figure shows the battery capacity loss versus the weight gamma and the power tracking error versus the gamma weight.}
\label{Battery_Capacity}
\end{figure}

\begin{figure}[t!]
\centering
\centerline{\includegraphics[width=0.95\columnwidth]{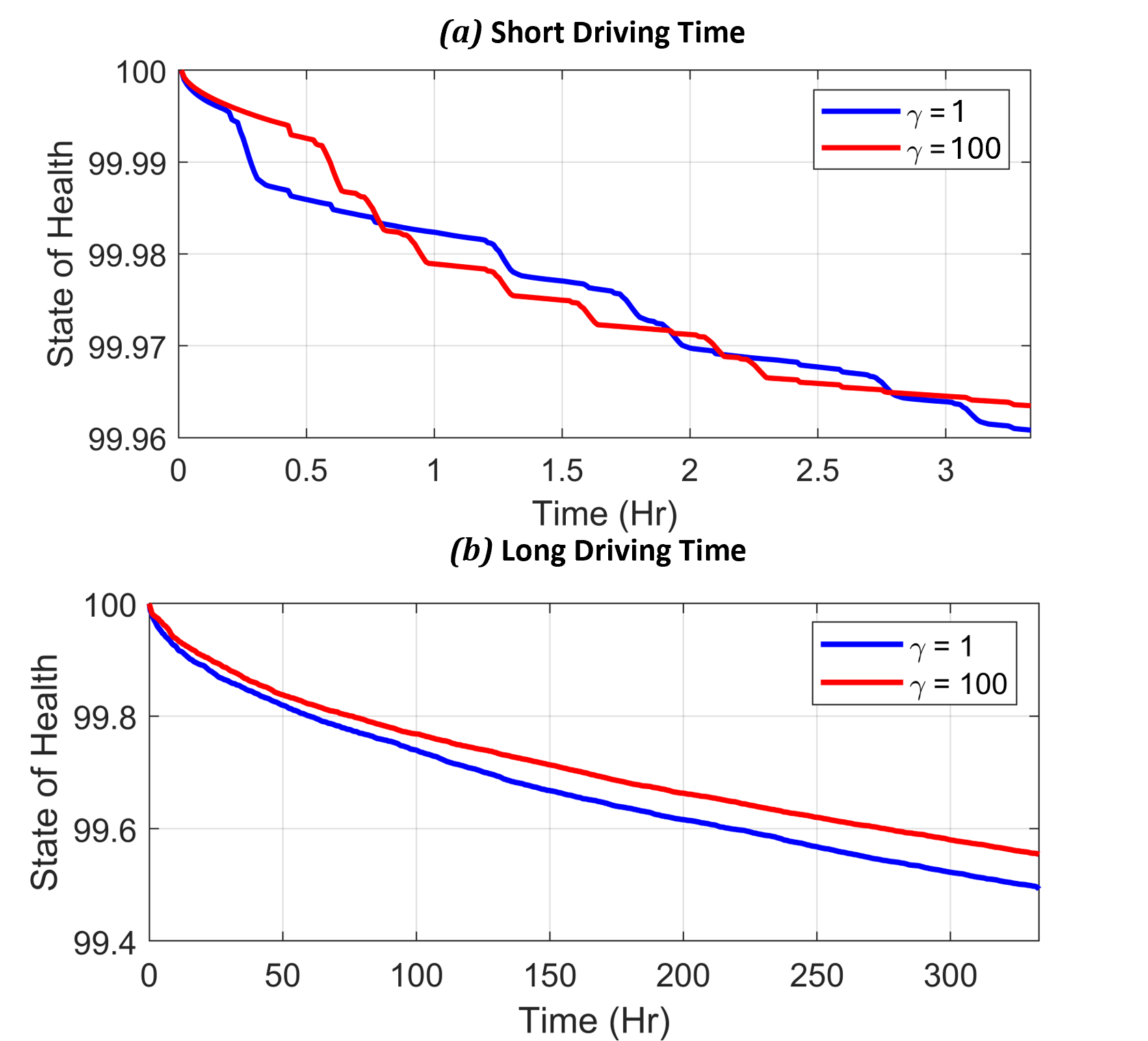}}
\caption{State of Health deterioration for different values of $\gamma$ for a hybrid electric vehicle, (a) for a short driving duration, (b) for a long driving duration.}
\label{SoH_Capacity}
\end{figure}

Fig.~\ref{Power_Track} shows the high-level energy management. The absolute total power demand and the engine and battery combined tracked power in \textsf{kW}, the optimal engine power in \textsf{kW}, the optimal battery power in \textsf{kW}, and the state of charge of the battery are demonstrated for a real data using the US06 (rural), and New York city cycle (NYCC: city) drive-cycle data. Two scenarios are simulated 1) city first, rural next and 2) rural first, city next. It can be seen that the battery is used conservatively and is only deployed when there is a high power demand scenario. Another observation regarding the SoC of the battery and its charging and discharging pattern should also be observed. During the normal power operation, the battery can be seen charging for rural first and city next. Whenever, the demand is not high, the controller charges the battery to be around the SoC of 0.8 (which is the upper limit).

Next, the impact of the battery weight $\gamma$ on the battery operation is studied. The value of $\gamma$ is tuned from $[0-1000]$ while keeping the other weights constant. Fig.~\ref{Battery_Capacity} presents the relationship between $\gamma$ and the battery capacity loss \%. As the value of $\gamma$ increases, the battery capacity loss \% decreases indicating less battery usage. However, it can also be seen from the plot that the power tracking error in \textsf{KW} increases. Thus, the trade-off between the battery health and tracking error is inferred from the results.

Finally, the effect of the weight $\gamma$ is tested using three real drive-cycle data sets (US06, NYCC, SC03). The choice behind this selection is to incorporate both the highway (US06, SC03) and the city driving (NYCC) conditions. The drive cycles were randomized for every simulation run and around 2000 simulations were performed accumulating to the total driving time of around 360 hours. The aforementioned simulation was performed for different weights of $\gamma$ (1 and 100). Fig.~\ref{SoH_Capacity} shows the result of the above simulated scenarios. While, during the short drive time i.e. around 2-3 hours of driving, the effect of $\gamma$ on battery health cannot be seen clearly. But it can be noticed that, as the driving time increases, even with different randomized drive-cycles, the effect of the weight $\gamma$ begins to appear.

\begin{figure}[t!]
\centering
\centerline{\includegraphics[width=0.95\columnwidth]{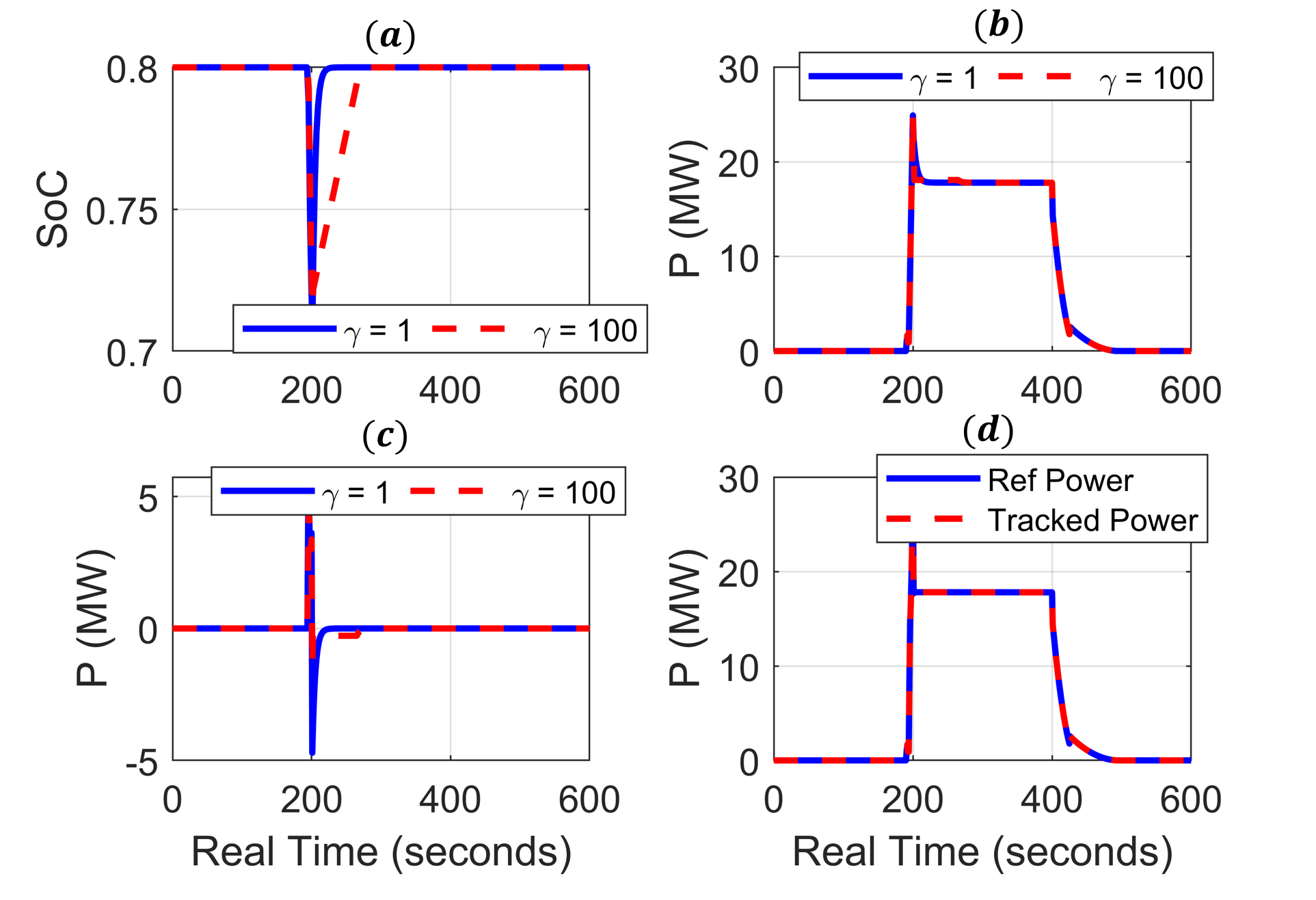}}
\caption{(a) State of charge, (b) engine power, (c) battery power and (d) power tracking for a dynamically positioned ship while anchoring, while ramping to cruising speed and de-ramping back to anchoring for $\gamma=1$ and $\gamma = 100$.}
\label{DPS_Powertrack}
\end{figure}

\begin{figure}[t!]
\centering
\centerline{\includegraphics[width=0.85\columnwidth]{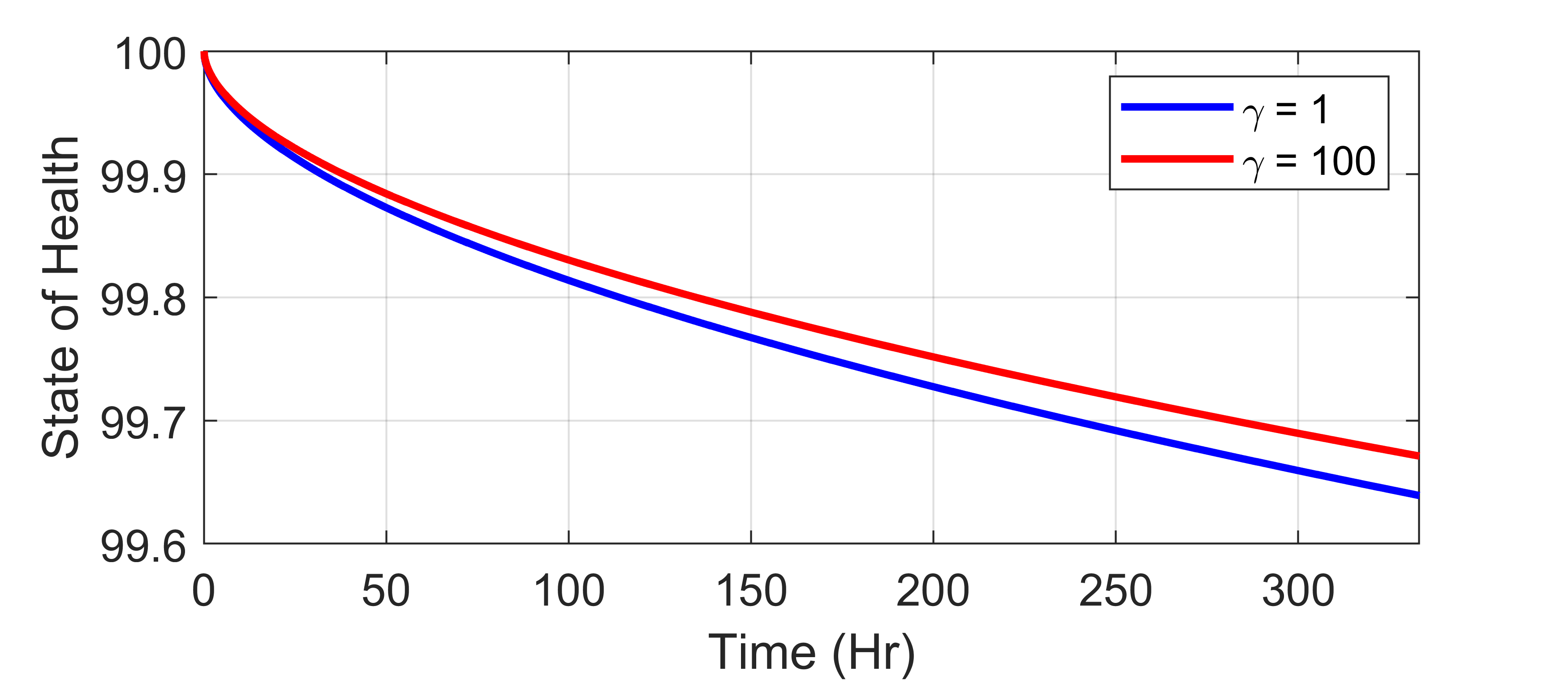}}
\caption{State of Health deterioration for different values of  $\gamma$ for a dynamically positioned ship.}
\label{DPS_SoH}
\end{figure}

\subsection{Dynamically Positioned Hybrid Ships}
Next, we test the developed distributed algorithm to facilitate battery health-conscious power split for a dynamically positioned hybrid ship. To that extent, we design a velocity reference profile in such a way that the ship is initially anchored and then ramps up swiftly to the cruising speed (typically $10$\textsf{m/s} or $19.4$\textsf{knots}) and then de-ramps slowly back to anchoring position. The optimization parameters fixed across the simulations are $\alpha=1000$, $\beta=1$, $\rho=0.1$, and $h=5$\textsf{seconds}. The simulation is run at a fixed time-step of $1$\textsf{ms}, with the optimization problem running every $1$\textsf{second}. The mass of the ship is assumed to be around $1000$\textsf{ton}. The rated engine power is $40$\textsf{MW}, the rated battery power is $10$\textsf{MW}, the state of charge upper and lower limits $0.8$, and $0.5$, ramp-rate of engine 10\% of rated engine power, ramp-rate of the battery 90\% of the rated battery power. Fig.~\ref{DPS_Powertrack} shows the state of charge, engine power, battery power, and the power tracking performance of the designed algorithm.

Fig.~\ref{DPS_SoH} shows the state of health of the battery for different values of the battery penalty weight $\gamma$. It can be seen that as the penalty of the battery is increased, more importance is given to mitigating the battery degradation. 

\begin{figure}[t!]
\centering
\centerline{\includegraphics[width=0.95\columnwidth]{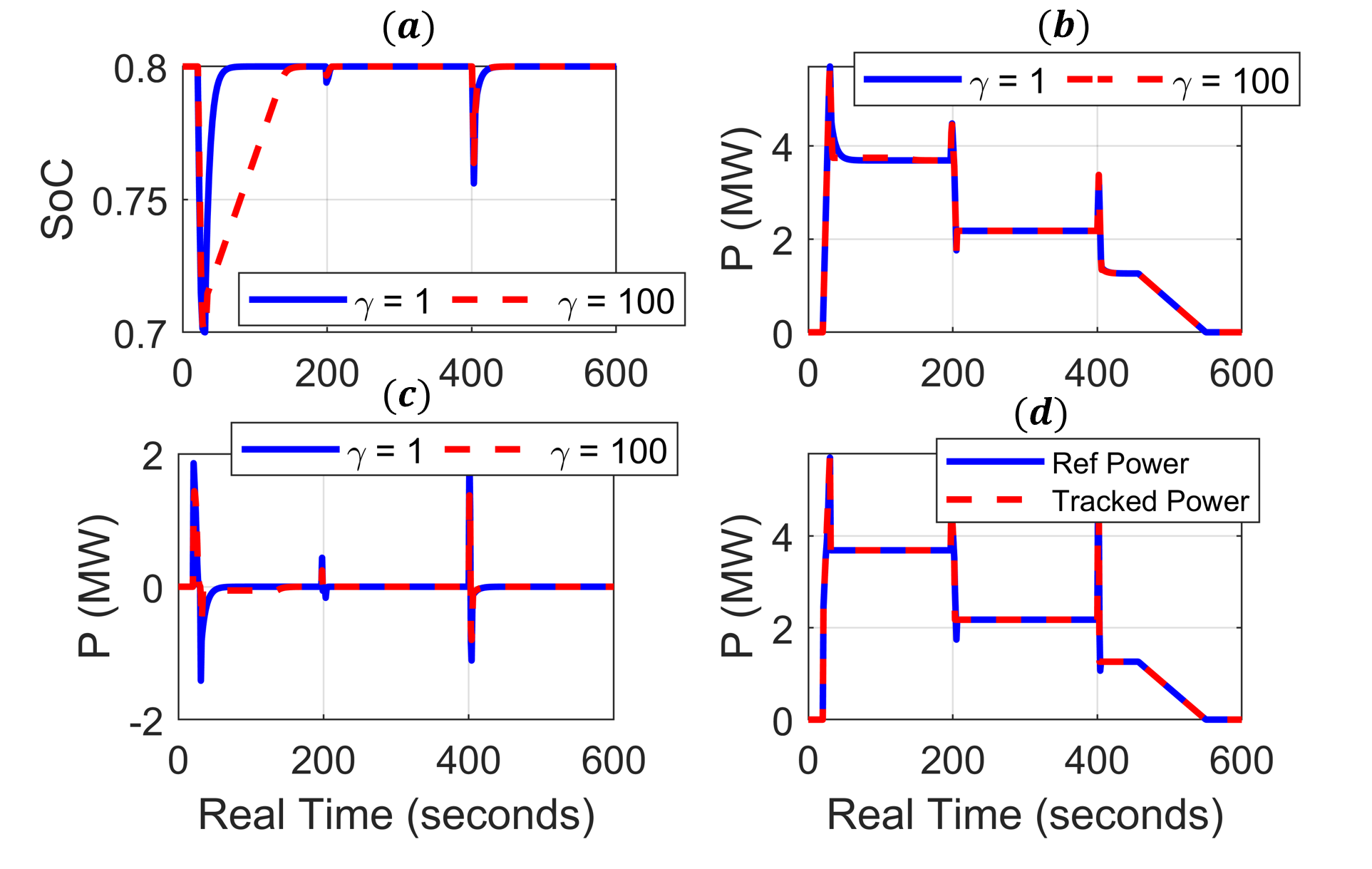}}
\caption{(a) State of charge, (b) engine power, (c) battery power, and (d) power tracking for a hybrid electric aircraft while taking off, cruising at an altitude, and while descending back for $\gamma=1$, and $\gamma = 100$.}
\label{HEA_Powertrack}
\end{figure}

\subsection{Hybrid Electric Aircraft}
Next, we test the developed distributed algorithm on a hybrid electric vehicle, the velocity profile is designed so that the take-off maneuver, followed by fixed altitude cruising action, followed by the descent action are considered. The parameters for the hybrid electric aircraft chosen based on \cite{DOFFSOTTA20206043} as $m_a$ = 42000\textsf{kg}, $\rho_a$ = 1.225$\textsf{Kg-m}^{-2}$, $C_d$ = 0.03, $S$ = 77.3$\textsf{m}^2$. The power ratings considered for the engine and battery are $6$\textsf{MW}, and $2$\textsf{MW}. The ramping capacities are chosen as $10\%$ of rated engine power, and $90\%$ of the rated battery power. Fig.~\ref{HEA_Powertrack} shows the state of charge, engine power, and battery power for multiple values of the battery weights $\gamma = 1$, and $\gamma = 100$. The fixed optimization parameters $\alpha, \beta, h$ are similar to the ones used for HEV and DPS. To avoid redundancy, we do not explicitly present them.

\begin{figure}[t!]
\centering
\centerline{\includegraphics[width=0.85\columnwidth]{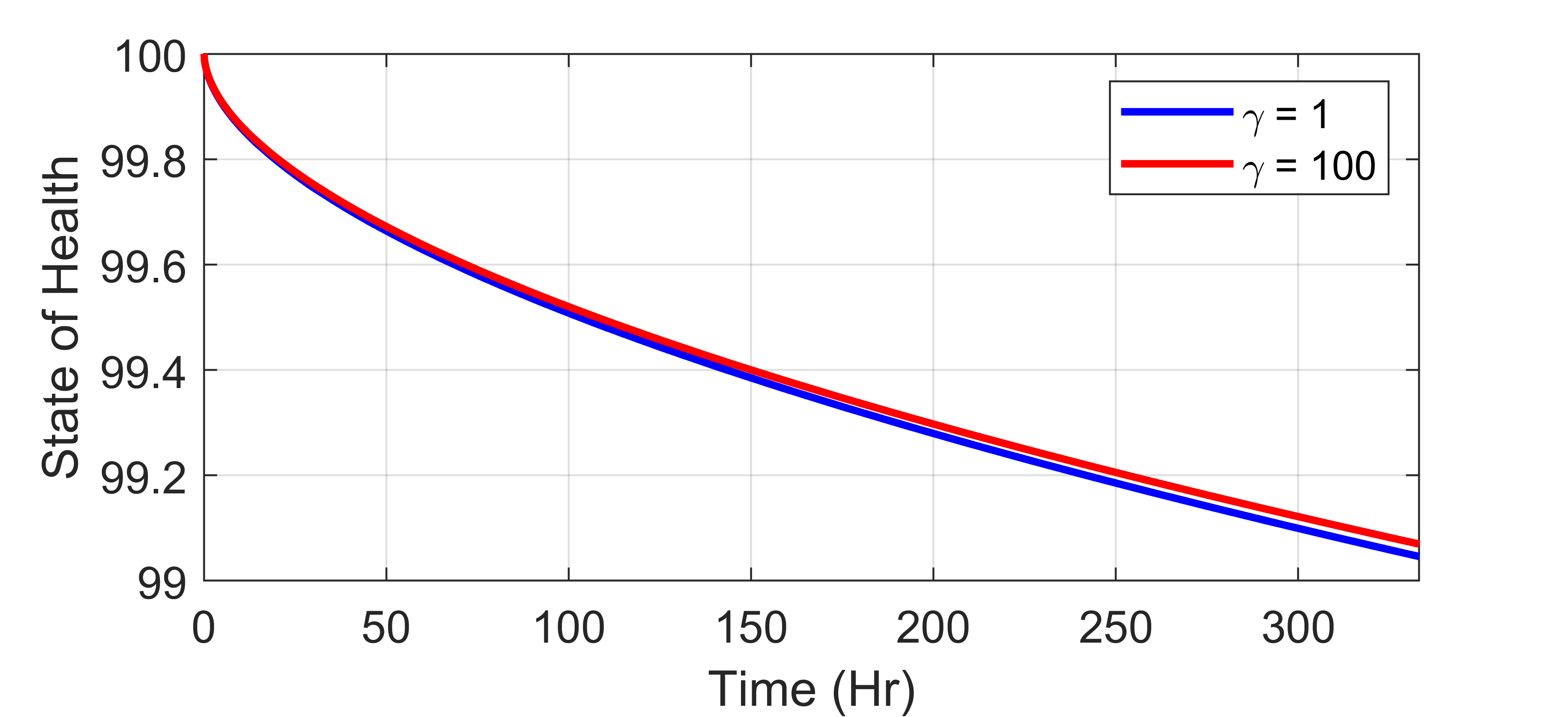}}
\caption{State of Health deterioration for different values of  $\gamma$ for a hybrid electric aircraft.}
\label{HEA_SoH}
\end{figure}

The impact of the different battery penalty weights $\gamma$ is shown in Fig.~\ref{HEA_SoH}. It can be inferred that the power ratings of the engine, and the battery play a key role in determining the battery degradation. Moreover, the HEA power demand also plays an important role in determining battery degradation. 

Overall, across multiple real-time numerical simulations on HEV, DPS, and HEA, the observation remains the same in terms of mitigating battery usage at the expense of more engine usage. The trade-off can be derived between efficient engine operation and conservative battery usage. Thus, the importance of the designed algorithm and the importance of the appropriate battery weighting to mitigate battery degradation has been effectively demonstrated.

\section{Conclusion} \label{Sec: Conclusion}
We considered battery degradation in HEV and developed a corresponding health-heuristic-based model predictive energy management strategy that adaptively splits the power in a distributed framework. The choice of minimizing the battery power as a heuristic to mitigate the battery degradation is shown, through simulation by considering realistic drive cycle data from the U.S. EPA for hybrid electric vehicles and a simulation duration of 360 hours for hybrid electric vehicles, dynamically positioned hybrid ships, and hybrid electric aircraft to improve battery longevity while not sacrificing engine-efficient operation. The results presented demonstrate the operation of the designed algorithm and its effectiveness in mitigating battery degradation. Although the splitting problem was completely derived and results demonstrated via a more realistic simulation, as compared to related works in literature, the overall stability assessment with the designed algorithm relied on well-known stability properties of the MPC problem. The effect of the delays in the algorithm will be reported in the future extensions of this work.

\bibliographystyle{IEEEtran}
\bibliography{myreferences}

\end{document}